\title{Heavy Ball and Nesterov Accelerations with Hessian-driven Damping for Nonconvex Optimization}
\author{N. Hadjisavvas\thanks{Department of Product and Systems Design Engineering, University of the Aegean, Hermoupolis, Syros, Greece.  E-mail: nhad@aegean.gr, ORCID-ID: 0000-0002-9895-8190} 
\and
F. Lara\thanks{Instituto de Alta investigaci\'on (IAI), Universidad de Tarapac\'a, Arica, Chile. E-mail: felipelaraobreque@gmail.com; flarao@academicos.uta.cl. Web: felipelara.cl, ORCID-ID: 0000-0002-9965-0921} 
\and
R.T.  Marcavillaca \thanks{Centro de Modelamiento Matem\'atico (CNRS UMI2807), Universidad de Chile, Santiago, Chile. E-mail: raultm.rt@gmail.com;
rtintaya@cmm.uchile.cl, ORCID-ID: 0000-0003-3748-0768}
\and
P.T. Vuong\thanks{School of Mathematical Sciences, University of Southampton, SO17 1BJ, Southampton, United Kingdom. E-mail: t.v.phan@soton.ac.uk, ORCID-ID: 0000-0002-1474-99}}
\providecommand{\U}[1]{\protect \rule{.1in}{.1in}}
\newtheorem{theorem}{Theorem}
\newtheorem{corollary}[theorem]{Corollary}
\newtheorem{example}[theorem]{Example}
\newtheorem{lemma}[theorem]{Lemma}
\newtheorem{proposition}[theorem]{Proposition}
\newtheorem{remark}[theorem]{Remark}
\begin{document}

\maketitle

\begin{abstract}
\noindent In this work, we investigate a second-order dynamical system with Hessian-driven damping tailored for a class of nonconvex functions called strongly quasiconvex. Buil\-ding upon this continuous-time model, we derive two discrete-time gra\-dient-based algorithms through time discretizations. The first is a Heavy Ball method with Hessian correction, incorporating cur\-va\-tu\-re-dependent terms that arise from discretizing the Hessian damping component. The second is a Nesterov-type accelerated method with adaptive momentum, fea\-tu\-ring correction terms that account for local curvature. Both algorithms aim to enhance stability and convergence performance, particularly by mi\-ti\-ga\-ting oscillations commonly observed in cla\-ssi\-cal momentum me\-thods. Furthermore, in both cases we establish li\-near convergence to the optimal solution for the iterates and functions values. Our approach highlights the rich interplay between continuous-time dynamics and discrete optimization algorithms in the se\-tting of strongly quasiconvex objectives. Numerical experiments are presented to support 
obtained results.

\medskip

\noindent{\small \emph{Keywords}: Nonconvex optimization; Dynamical systems; Hessian driven damping; Heavy-ball acceleration; Nesterov acceleration; Linear convergence.}
\end{abstract}

\section{Introduction}

Let $h: \mathbb{R}^n \rightarrow \mathbb{R}$ be a differentiable function. We are interested in the study of the minimization problem  
\begin{equation}\label{min.h}
 \min_{x \in \mathbb{R}^n} h(x),
\end{equation}
when the function $h$ belongs to a class of quasiconvex functions known as \textit{strongly quasiconvex functions} \cite{P}.

Quasiconvex functions play an important role in several areas of applied mathematics such as mathematical programming, mathematical finance, mi\-ni\-max theory and game theory \cite{ADSZ,CM-Book,HKS} as well as in economic analysis, particularly in production and utility theory \cite{AE,D-1959}. The significance of quasiconvexity lies in the natural {\it tendency to diversification} assumption on the consumers (see \cite{D-1959}) as well as in their ability to generalize convexity while preserving convex sublevel sets, ma\-king them especially suitable for modeling problems in other fields of mathematical applications which involve nonconvex geometries.

Strongly quasiconvex functions, introduced by Polyak in his seminal work \cite{P}, belong to a class of quasiconvex functions that satisfies several key pro\-per\-ties. These include the existence and uniqueness of global minimizers with a quadratic growth condition, gradient dominance, and bounded nonconvexity. Moreover, well-known algorithms such as proximal point and gradient based methods exhi\-bit linear convergence to the global optimal solution for such functions (see \cite{GLM-1,ILMY,Lara-9,LMV,LV}). Strongly quasiconvex functions include all strongly convex functions and also relevant nonconvex functions such as the square root of the Euclidean norm \cite{Lara-9}, and certain function ratios in which a strongly convex numerator is divided by a positive concave denominator \cite{ILMY}. Applications of strongly quasiconvex functions arise in diverse areas such as the calculus of variations, where they model energy functionals with controlled nonconvexity \cite{KOS}, Bayesian inference \cite{TIW} and  nonconvex minimax optimization \cite{CGC} among others. For a comprehensive overview of the theory and recent developments in this field, we refer the reader to the survey \cite{GLM-Survey}.

Gradient descent is a computationally efficient method for minimizing di\-ffe\-ren\-tiable functions. Starting from a point \( x_0 \in \mathbb{R}^n \), it follows the iteration:
\begin{equation}\label{eq:gradient}
 x_{k+1} = x_k - \beta_k \nabla h(x_k), ~~ \forall ~ k \geq 0,
\end{equation}
where \( \{\beta_k\}_{k} \) denotes a sequence of step sizes. This method can be viewed as a discretization of the first-order differential equation 
$$\dot{x}(t) = -\nabla h(x(t)).$$ 

Despite its simplicity, gradient descent often suffers from slow convergence and oscillatory (``zigzag'') behavior, particularly in ill-conditioned or nonconvex problems. To address these drawbacks, Polyak proposed an accelerated gradient scheme derived from the second-order dynamical system now known as the \textit{Heavy Ball with Friction} (HBF) system \cite{P2}:
\begin{equation}\label{eq:polyak_ode}
\ddot{x}(t) + \alpha \dot{x}(t) + \nabla h(x(t)) = 0, \quad t > 0, \quad \alpha > 0,
\end{equation}
where the damping term \( \alpha \dot{x}(t) \) reduces oscillations and the inertial term \( \ddot{x}(t) \) acce\-le\-ra\-tes convergence. Its discrete-time counterpart is precisely the well-known Heavy Ball Method given by
\begin{equation}\label{eq:heavy_ball}
 x^{k+1} = x^k + \alpha_k (x^k - x^{k-1}) - \beta_k \nabla h(x^k),
\end{equation}
where $\{\alpha_k\}_{k}$ is a momentum parameter sequence controlling the influence of previous iterates.

The Heavy Ball Method has been widely studied in both convex and nonconvex settings (see, e.g.,~\cite{Alv,Att,AC2017,ADR2,ADR,GOM,LMV}). Under suitable conditions, HBF can accelerate convergence relative to gradient descent, especially in problems with high curvature or in the presence of saddle points and nonconvexity, common situations in machine learning \cite{LVLL,MJ} and its applications.

Recently, the study of optimization methods via discretization of dynamical systems has attracted considerable interest. In convex settings, such methods can achieve improved convergence rates and reduced oscillations. In nonconvex settings, HBF may still outperform basic gradient descent, even though it may exhibit oscillatory behavior in poorly conditioned regions. These insights have motivated the development of more refined acceleration techniques, including Nesterov’s method \cite{Y-1983}, which offers faster convergence in the convex case \cite{SBC}.

For strongly convex and strongly quasiconvex functions, the HBF method is known to yield exponential convergence of \( h(x(t)) \) to the minimum value $\min_{\mathbb{R}^{n}}\,h$, but for general convex functions, the convergence rate of the HBF is limited to \(\mathcal{O}(1/t) \) in the worst case, matching that of gradient descent. Moreover, oscillations may persist when the function is ill-conditioned or has flat regions.

For dealing with these situations, the authors in \cite{AABR} proposed an enhancement of HBF through a Hessian-driven damping term. This leads to the modified system:
\begin{equation}\label{eq:hessian_damped}
\ddot{x}(t) + \alpha \dot{x}(t) + \beta \nabla^2 h(x(t)) \dot{x}(t) + \nabla h(x(t)) = 0, \quad t > 0,
\end{equation}
where the additional term \(\nabla^2 h(x(t)) \dot{x}(t)\) adapts the damping effect according to the local curvature of the function. This modification effectively suppresses oscillations that typically arise in HBF, especially in ill-conditioned problems such as the Rosenbrock function \cite{AABR}. Their results demonstrate improved convergence rates compared to the classical HBF method, making this approach particularly valuable in nonconvex and large-scale optimization.

Motivated by these developments and related works \cite{AABR,ACFR,LMV}, in this paper we propose to study the evolution of the dynamical system \eqref{eq:hessian_damped} in the context of strongly quasiconvex functions. Furthermore, we also develop two discrete-time algorithms derived from tailored discretizations of the continuous model:
\begin{itemize}
\item \textit{Heavy Ball Method with Hessian Correction}:
\begin{equation}\label{eq:heavy_ball_corrected}
 \begin{split}
  y_k & = x_k + \alpha_{k} (x_k - x_{k-1}) - \theta_k \bigl( \nabla h(x_k) - \nabla h(x_{k-1}) \bigr), \\[1mm]
  x_{k+1} & = y_k - \beta_k \nabla h(x_k),
 \end{split}
\end{equation}
where the correction term approximates the Hessian-driven damping by finite differences.
    
\item \textit{Nesterov-type Method with Adaptive Momentum}:
\begin{equation}\label{eq:nesterov_adapted}
 \begin{split}
  y_k & = x_k + \alpha_{k} (x_k - x_{k-1}) - \theta_k \bigl( \nabla h(x_k) - \nabla h(x_{k-1}) \bigr),\\[1mm]
  x_{k+1} & = y_k - \beta_k \nabla h(y_k),
  \end{split}
 \end{equation}
which extends Nesterov's method with curvature-aware updates.
\end{itemize}

Our analysis bridges continuous-time dynamics and discrete optimization algorithms, establishing exponential and linear convergence rates and illustrating how Hessian-based damping mitigates oscillations more effectively than traditional momentum me\-thods. This work advances the understanding of second-order methods in the context of nonconvex functions by showing a class of ge\-ne\-ra\-li\-zed convex functions for which exponential and linear convergence rates can be obtained and also underscores the potential of combining geometric insights with algorithmic design.

The paper is organized as follows. In Section \ref{sec:02}, we introduce the preliminary definitions and basic results used throughout the paper. In Section \ref{sec:03}, we present a detailed study of the dynamical behavior of second-order system \eqref{eq:hessian_damped}; we discuss the assumptions required to establish convergence rates and their relationship with strongly quasiconvex functions. Furthermore, in Section \ref{sec:04}, we analyze the convergence rates of the discrete‐time algorithms derived from \eqref{eq:hessian_damped}, namely the Heavy Ball method with Hessian correction \eqref{eq:heavy_ball_corrected} and the Nesterov‐type method with adaptive momentum \eqref{eq:nesterov_adapted}, and compare these new results with the state of the art on this subject. Moreover, numerical experiments are given in Section \ref{sec:05} in order to illustrate our new contributions. Finally, future research lines are discussed in Section \ref{sec:05}.

\section{Preliminaries}\label{sec:02}

The inner product in $\mathbb{R}^{n}$ and the Euclidean norm are denoted by $\langle \cdot,\cdot \rangle$ and $\lVert \cdot \rVert$, respectively. The sets $[0, + \infty[$ and $]0, + \infty[$ are denoted by $\mathbb{R}_{+}$ and by $\mathbb{R}_{++}$, res\-pec\-ti\-ve\-ly. Given any $x, y \in \mathbb{R}^{n}$ and any $\beta \in \mathbb{R}$, the following relation holds:
\begin{align}
 \lVert \beta x + (1-\beta) y \rVert^{2} &= \beta \lVert x \rVert^{2} + (1 - \beta) \lVert y\rVert^{2} - \beta(1 - \beta) \lVert x - y \rVert^{2}. \label{iden:1} 
\end{align}

Given any extended-valued function $h: \mathbb{R}^{n} \rightarrow \overline{\mathbb{R}}$, the effective domain of $h$ is defined by ${\rm dom}\,h := \{x \in \mathbb{R}^{n}: h(x) < + \infty \}$. It is said that $h$ is proper if ${\rm dom}\,h$ is nonempty and $h(x) > - \infty$ for all $x \in \mathbb{R}^{n}$. The notion of properness is important when dealing with minimization pro\-blems.

We denote by ${\rm epi}\,h := \{(x,t) \in \mathbb{R}^{n} \times \mathbb{R}: h(x) \leq t\}$ the epigraph of $h$, by $S_{\lambda} (h) := \{x \in \mathbb{R}^{n}: h(x) \leq \lambda\}$ the sublevel set of $h$ at the height $\lambda \in \mathbb{R}$ and by ${\rm argmin}_{\mathbb{R}^{n}} h$ the set of all points of minimum of $h$. A function $h$ is lower se\-mi\-continuous (lsc henceforth) at $\overline{x} \in \mathbb{R}^{n}$ if for any sequence $\{x_k\}_{k} \subset \mathbb{R}^{n}$ with $x_k \rightarrow \overline{x}$, we have $h(\overline{x}) \leq \liminf_{k \rightarrow + \infty} h(x_k)$. Furthermore, the usual con\-ven\-tion  $\sup_{\emptyset} h := - \infty$ and $\inf_{\emptyset} h := + \infty$ is adopted.

A proper function $h$ with convex domain is said to be:
\begin{itemize}
 \item[$(a)$] convex if, given any $x, y \in \mathrm{dom}\,h$, then
 \begin{equation}\label{def:convex}
  h(\lambda x + (1-\lambda)y) \leq \lambda h(x) + (1 - \lambda) h(y),
  ~ \forall ~ \lambda \in [0, 1],
 \end{equation}

 \item[$(b)$] strongly convex on ${\rm dom}\,h$ with modulus $\gamma
 \in \, ]0, + \infty[$ if for all $x, y \in \mathrm{dom}\,h$ and all
 $\lambda \in[0, 1]$, we have
 \begin{equation}\label{strong:convex}
  h(\lambda y + (1-\lambda)x) \leq \lambda h(y) + (1-\lambda) h(x) -
  \lambda (1 - \lambda) \frac{\gamma}{2} \lVert x - y \rVert^{2},
 \end{equation}

 \item[$(c)$] quasiconvex if, given any $x, y \in \mathrm{dom}\,h$, then
 \begin{equation}\label{def:qcx}
  h(\lambda x + (1-\lambda) y) \leq \max \{h(x), h(y)\}, ~ \forall ~
  \lambda \in [0, 1],
 \end{equation}

 \item[$(d)$] strongly quasiconvex on ${\rm dom}\,h$ with modulus
 $\gamma \in \, ]0, + \infty[$ if for all $x, y \in \mathrm{dom}\,h$ and all 
 $\lambda \in[0, 1]$, we have
 \begin{equation}\label{strong:quasiconvex}
  h(\lambda y + (1-\lambda)x) \leq \max \{h(y), h(x)\} - \lambda(1 -
  \lambda) \frac{\gamma}{2} \lVert x - y \rVert^{2}.
 \end{equation}
 \noindent It is said that $h$ is strictly convex (resp. strictly quasiconvex) 
 if the inequa\-li\-ty in \eqref{def:convex} (resp. \eqref{def:qcx}) is strict 
 whenever $x \neq y$ and $\lambda \in \, ]0, 1[$.
 \end{itemize}
 The relationship between all these notions is summarized below (we denote quasiconvex by qcx):
 \begin{align*}
  \begin{array}{ccccccc}
  {\rm strongly ~ convex} & \Longrightarrow & {\rm strictly ~ convex} & \Longrightarrow & {\rm convex} \notag \\
  \Downarrow & \, & \Downarrow & \, & \Downarrow \notag \\
  {\rm strongly ~ qcx} & \Longrightarrow & {\rm strictly ~ qcx} & \Longrightarrow & {\rm qcx}.
  \end{array}
 \end{align*}
None of the reverse implications holds in general. For instance, the Euclidean norm $h_{1}(x) = \lVert x \rVert$ is strongly quasiconvex without  being strongly convex on any bounded convex set (see \cite[Theorem 2]{J-2}) and the function $h_{2} (x) = \frac{x}{1 + \lvert x \rvert}$ is  strictly quasiconvex without being strongly quasiconvex on $\mathbb{R}$ while the other counterexamples are well-known (see \cite{CM-Book,HKS}).

We recall the following existence result for lsc strongly quasiconvex functions.

\begin{lemma}\label{exist:unique} {\rm (\cite[Corollary 3]{Lara-9})}
 Let $K \subseteq \mathbb{R}^{n}$ be a closed and convex set and $h:
 \mathbb{R}^{n} \rightarrow \overline{\mathbb{R}}$ be a proper, lsc, and
 strongly qua\-si\-con\-vex function on $K \subseteq {\rm dom}\,h$ with
 modulus $\gamma> 0$. Then, ${\rm argmin}_{K} h$ is a singleton.
\end{lemma}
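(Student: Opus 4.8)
The plan is to split the argument into three parts: boundedness of the sublevel sets of $h$ on $K$, existence of a minimizer via a Weierstrass-type argument, and uniqueness via strong quasiconvexity. Throughout I would fix a reference point $x_0 \in K$, noting that $h(x_0) \in \mathbb{R}$ since $h$ is proper and $K \subseteq {\rm dom}\,h$.

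First I would show that for every $\lambda \in \mathbb{R}$ the set $S_{\lambda}(h) \cap K$ is bounded; this is the heart of the proof. Arguing by contradiction, suppose there is a sequence $\{y_k\}_k \subseteq K$ with $h(y_k) \leq \lambda$ and $\lVert y_k - x_0 \rVert \to +\infty$. On each segment $[x_0, y_k]$ I would select the point $z_k = \mu_k y_k + (1-\mu_k) x_0$ lying at unit distance from $x_0$, i.e. $\mu_k = 1/\lVert y_k - x_0 \rVert \to 0$. Applying \eqref{strong:quasiconvex} and using $\mu_k \lVert y_k - x_0 \rVert = 1$ gives
\begin{equation*}
 h(z_k) \leq \max\{h(y_k), h(x_0)\} - \mu_k(1-\mu_k)\frac{\gamma}{2}\lVert y_k - x_0 \rVert^{2} \leq M - (1-\mu_k)\frac{\gamma}{2}\lVert y_k - x_0 \rVert,
\end{equation*}
with $M := \max\{\lambda, h(x_0)\}$. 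Since the right-hand side tends to $-\infty$ while the $z_k$ all lie on the compact sphere $\{z : \lVert z - x_0 \rVert = 1\}$, a convergent subsequence $z_{k_j} \to \bar{z} \in K$ (using closedness and convexity of $K$) together with lower semicontinuity would force $h(\bar{z}) \leq \liminf_j h(z_{k_j}) = -\infty$, contradicting properness. Hence every sublevel set intersected with $K$ is bounded.

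With this in hand, existence follows in the standard manner. Let $m := \inf_K h$ and take a minimizing sequence $\{y_k\}_k \subseteq K$ with $h(y_k) \to m$. Its tail lies in a bounded sublevel set, so after passing to a subsequence $y_{k_j} \to \bar{y} \in K$. Lower semicontinuity yields $h(\bar{y}) \leq \liminf_j h(y_{k_j}) = m$; the same reasoning simultaneously excludes $m = -\infty$ (which would again violate properness), so $m \in \mathbb{R}$ and $\bar{y} \in {\rm argmin}_K h$. Finally, uniqueness is immediate: if $x_1, x_2 \in {\rm argmin}_K h$ with $x_1 \neq x_2$, then $\tfrac{1}{2}(x_1 + x_2) \in K$ by convexity, and \eqref{strong:quasiconvex} with $\lambda = \tfrac{1}{2}$ gives $h\bigl(\tfrac{1}{2}(x_1 + x_2)\bigr) \leq m - \tfrac{\gamma}{8}\lVert x_1 - x_2 \rVert^{2} < m$, contradicting minimality. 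Thus ${\rm argmin}_K h$ is a singleton.

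The main obstacle is the boundedness step. The key trick is to evaluate $h$ not at the far-away points $y_k$ but at intermediate points held at a fixed distance from $x_0$: the strong-quasiconvexity penalty $\mu_k(1-\mu_k)\tfrac{\gamma}{2}\lVert y_k - x_0 \rVert^{2}$ then grows without bound even though $\mu_k \to 0$, and the interplay of compactness, lower semicontinuity, and properness produces the required contradiction. Everything afterward is routine.
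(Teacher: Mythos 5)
Your proof is correct: the unit-distance interpolation argument does force the sublevel sets $S_{\lambda}(h)\cap K$ to be bounded, the Weierstrass step (which simultaneously rules out $\inf_{K}h=-\infty$ via properness) is sound, and the midpoint argument with $\lambda=\tfrac12$ in \eqref{strong:quasiconvex} gives uniqueness. Note that the paper itself does not prove this lemma --- it is quoted verbatim from \cite[Corollary 3]{Lara-9} --- but your argument is exactly the standard route taken in that reference: strong quasiconvexity forces coercivity-type boundedness of sublevel sets, lower semicontinuity plus closedness of $K$ gives existence, and the quadratic penalty at the midpoint gives uniqueness, so there is nothing to flag.
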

Furthermore, if $h$ is lsc and strongly quasiconvex with modulus $\gamma > 0$ on a closed convex set $K \subset \mathbb{R}^{n}$, then the unique minimizer $\overline{x} = {\rm argmin}_{K}\,h$ satisfies that (see \cite[Theorem 5.1]{NS})
\begin{equation}\label{strong min}    h(\overline{x}) + \frac{\gamma}{4} \|x - \overline{x}\|^2 \leq h(x), ~ \forall ~ x \in K.
\end{equation}

Before continuing, let us show some new examples of strongly quasiconvex functions which are not convex.

\begin{remark}\label{rem:exam}
 \begin{itemize}
  \item[$(i)$] Let $h: \mathbb{R}^{n} \rightarrow \mathbb{R}$ be given by $h(x) = \lVert x \rVert^{\alpha}$, with $0 < \alpha < 1$. Clearly, $h$ is nonconvex, but it is strongly quasiconvex on any convex and bounded set by \cite[Corollary 3.9]{NS}. 

  \item[$(ii)$] Let $A, B \in \mathbb{R}^{n\times n}$ be two symmetric matrices, $a, b \in \mathbb{R}^{n}$, $\alpha, \beta \in \mathbb{R}$, and $h: \mathbb{R}^{n} \rightarrow \mathbb{R}$ be the function given by:
  \begin{equation}
   h(x) = \frac{f(x)}{g(x)} = \frac{\frac{1}{2} \langle Ax, x \rangle + \langle a, x \rangle + \alpha}{\frac{1}{2} \langle Bx, x \rangle + \langle b, x 
   \rangle + \beta}.
  \end{equation}
  Take $0 < m < M$ and define:
  $$K := \{x \in \mathbb{R}^{n}: ~ m \leq g(x) \leq M\}.$$ 
   If $A$ is a positive definite matrix and at least one of the following conditions holds:
  \begin{enumerate}
   \item[$(a)$] $B = 0$ (the null matrix),
 
   \item[$(b)$] $h$ is nonnegative on $K$ and $B$ is negative semidefinite,
 
   \item[$(c)$] $h$ is nonpositive on $K$ and $B$ is positive semidefinite,
 \end{enumerate}
  then $h$ is strongly quasiconvex on $K$ with modulus $\gamma = \frac{\lambda_{\min} (A)}{M}$ by \cite[Corollary 4.1]{ILMY}, where $\lambda_{\min} (A)$ is the minimum eigenvalue of $A$.

  \item[$(iii)$] Let $A: \mathbb{R}^{n} \rightarrow \mathbb{R}^{n}$ be a linear operator and $h$ a strongly quasiconvex function with modulus $\gamma \geq 0$. Then $f := h \circ A$ is strongly quasiconvex with modulus $\gamma \sigma_{\min} (A) \geq 0$ (see \cite[Proposition 11$(b)$]{GLM-Survey}), where $\sigma_{\min} (A) \geq 0$ is the minimum singular value of $A$.
  
  \item[$(iv)$] Let $h_{1}, h_{2}: \mathbb{R}^{n} \rightarrow \mathbb{R}$ be two strongly quasiconvex functions with modulus $\gamma_{1}, \gamma_{2} > 0$, respectively. Then $h := \max\{h_{1}, h_{2}\}$ is 
  strongly quasiconvex with modulus $\gamma := \min\{\gamma_{1}, \gamma_{2}\} > 0$ (see \cite{VNC-2}).
 \end{itemize}
\end{remark}

A function $h: \mathbb{R}^{n} \rightarrow \mathbb{R}$ its said to be
$L$-smooth on $K \subseteq \mathbb{R}^{n}$ if it is differentiable on
$K$ and
\begin{equation}\label{L:smooth}
 \lVert \nabla h(x) - \nabla h(y) \rVert \leq L \lVert x - y \rVert, ~
 \forall ~ x, y \in K.
\end{equation}

For $L$-smooth functions, a fundamental result is the descent lemma
\cite[Lemma 1.2.3]{Nesterov-libro}, that is, if $h$ is a $L$-smooth
function on a convex set $K$ with value $L \geq 0$, then for every
$x, y \in K$, we have
\begin{equation}\label{descent:lemma}
 h(y) \leq h(x) + \langle \nabla h(x), y - x \rangle + \frac{L}{2} \lVert
 x - y \rVert^{2}.
\end{equation}
%

In the strongly quasiconvex case, we have (see \cite{J-1}).

\begin{lemma}\label{char:gradient} {\rm (\cite[Theorems 1 and 6]{VNC-2})}
 Let $K \subseteq \mathbb{R}^{n}$ be a convex open set and $h: K \rightarrow \mathbb{R}$ be differentiable function. Then $h$ is strongly
 quasiconvex if and only if there exists $\gamma > 0$ such that for every $x, y \in K$, we have
 \begin{equation}\label{gen:char}
  h(x) \leq h(y) ~ \Longrightarrow ~ \langle \nabla h(y), x - y \rangle
  \leq -\frac{\gamma}{2} \lVert y - x \rVert^{2}.
 \end{equation}
\end{lemma}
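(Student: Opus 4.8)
The plan is to prove the two implications separately, treating necessity as the routine part and sufficiency as the substantial one.

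For necessity, assume $h$ is strongly quasiconvex with modulus $\gamma$ and fix $x,y\in K$ with $h(x)\le h(y)$. I would apply \eqref{strong:quasiconvex} with parameter $1-\lambda$ in place of its running parameter, so that its argument is $(1-\lambda)y+\lambda x=y+\lambda(x-y)$, and use $\max\{h(x),h(y)\}=h(y)$ to obtain $h(y+\lambda(x-y))\le h(y)-\lambda(1-\lambda)\frac{\gamma}{2}\|x-y\|^2$ for $\lambda\in(0,1]$. Dividing by $\lambda>0$ and letting $\lambda\to0^+$, the left-hand difference quotient converges to the directional derivative $\langle\nabla h(y),x-y\rangle$ while the right-hand side tends to $-\frac{\gamma}{2}\|x-y\|^2$, which is exactly \eqref{gen:char}. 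This step is immediate from differentiability.

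For sufficiency, assume \eqref{gen:char} holds for some $\gamma>0$. I would first note that it implies the classical first-order characterization of quasiconvexity, $h(x)\le h(y)\Rightarrow\langle\nabla h(y),x-y\rangle\le 0$, so $h$ is quasiconvex and the $\max$-term in \eqref{strong:quasiconvex} is already available. Then I would fix $x,y\in K$ with $h(x)\le h(y)$ and study $\phi(t):=h((1-t)y+tx)$ on $[0,1]$, where $\phi(0)=h(y)\ge\phi(1)=h(x)$ and $\phi'(t)=\langle\nabla h((1-t)y+tx),x-y\rangle$. Letting $t^{*}$ be a minimizer of $\phi$ and applying \eqref{gen:char} at each interior point $w_s=(1-s)y+sx$ in the direction of the no-higher point $w_{t^{*}}$ yields $\phi'(s)\le-\frac{\gamma}{2}(t^{*}-s)\|x-y\|^2<0$ for $s<t^{*}$ and, symmetrically, $\phi'(s)>0$ for $s>t^{*}$; hence $\phi$ is strictly unimodal (a single valley) with $t^{*}$ unique.

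The quadratic modulus is then recovered by integrating a sharper pointwise bound: on the set where $\phi(s)\ge h(x)$, which by unimodality is an initial interval $[0,s_0]$, the value at $x$ is dominated, so \eqref{gen:char} may be applied at $w_s$ toward $x=w_1$, giving $\phi'(s)\le-\frac{\gamma}{2}(1-s)\|x-y\|^2$; integrating on $[0,t]$ produces $\phi(t)\le h(y)-\frac{\gamma}{2}\,t(1-\tfrac{t}{2})\|x-y\|^2$, which already dominates the required decrement $\frac{\gamma}{2}t(1-t)\|x-y\|^2$ on that interval since $1-\tfrac{t}{2}\ge 1-t$. The main obstacle is the complementary \emph{deep} region $t\in(s_0,1)$, where $w_t$ lies strictly below both endpoints (the extreme case being $h(x)=h(y)$, for which $s_0=0$): there \eqref{gen:char} cannot be pointed at either $x$ or $y$. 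I would resolve this by pairing points of equal height across the valley: for $w_t$ on one branch, the partner $w_{r(t)}$ on the other branch with $\phi(r(t))=\phi(t)$ is an equal-height, hence no-higher, target, so \eqref{gen:char} applied at each member of the pair yields the two-sided slope estimates $\phi'(t)\le-\frac{\gamma}{2}(r(t)-t)\|x-y\|^2$ and $\phi'(r(t))\ge\frac{\gamma}{2}(r(t)-t)\|x-y\|^2$. Combining these along the level sets, equivalently estimating the width of the valley as a function of its depth, recovers the parabolic profile, possibly with a smaller constant $\gamma'\in(0,\gamma]$; since the definition only requires strong quasiconvexity for \emph{some} modulus, this suffices. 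I expect this deep-region estimate, together with the bookkeeping needed to patch it to the endpoint estimate across $s_0$, to be the delicate part of the argument.
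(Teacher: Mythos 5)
The paper does not actually prove this lemma: it is imported verbatim from \cite[Theorems 1 and 6]{VNC-2}, so there is no in-paper argument to compare yours against, and I can only judge the proposal on its merits. Your necessity direction is complete and correct, and it even preserves the modulus: applying \eqref{strong:quasiconvex} so that the perturbed point is $y+\lambda(x-y)$, dividing by $\lambda$ and letting $\lambda\to 0^{+}$ is exactly the standard difference-quotient argument. Your sufficiency direction also has the right architecture, and the two ideas that genuinely carry it are both present: pointing \eqref{gen:char} at a minimizer $t^{*}$ of $\phi$ gives the strict slope signs (hence unimodality and uniqueness of $t^{*}$), and in the deep region, where neither endpoint is available as the lower point, the equal-height pairing $\phi(r(t))=\phi(t)$ supplies the missing slope bounds.

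The one place where you assert rather than prove is the final step (``combining these along the level sets \ldots recovers the parabolic profile''), and since this is the heart of the hard implication it should not be left as a claim. It does close, with only a constant-factor loss, along the following lines: for a deep level with endpoints $l\le t^{*}\le r$, monotonicity of the partner map lets you integrate your pairing bounds along each branch, giving
$\phi(0)-\phi(l)\ \ge\ \tfrac{\gamma}{2}\,l\,\bigl(r-\tfrac{l}{2}\bigr)\lVert x-y\rVert^{2}$
and
$\phi(1)-\phi(r)\ \ge\ \tfrac{\gamma}{4}(1-r)(1+r-2l)\lVert x-y\rVert^{2}$;
since equal-height partners have equal drops below $\phi(0)$, a case split on $r\ge\tfrac12$ versus $r<\tfrac12$ (for $t=l$) and on $l\le\tfrac12$ versus $l>\tfrac12$ (for $t=r$), the latter case handled by your point-at-the-minimizer bound, yields $\phi(t)\le\phi(0)-\tfrac{\gamma}{4}\,t(1-t)\lVert x-y\rVert^{2}$ for all $t\in[0,1]$, i.e.\ strong quasiconvexity with modulus $\gamma/2$. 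This also confirms your remark that only \emph{some} positive modulus is needed. So the proposal is sound in substance; as written, the deep-region conclusion is a true but unproved claim, and the bookkeeping above is what is required to turn the outline into a proof.
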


Following \cite{L,P1}, we also recall that a differentiable function $h:
\mathbb{R}^{n} \rightarrow \mathbb{R}$ satisfies the
Polyak-{\L}ojasiewicz (PL henceforth) property if there exists $\mu > 0$ such 
that
\begin{equation}\label{PL:def}
 \lVert \nabla h(x) \rVert^{2} \geq \mu (h(x) - h(\overline{x})), ~ \forall
 ~ x \in \mathbb{R}^{n},
\end{equation}
where $\overline{x} \in {\rm argmin}_{\mathbb{R}^{n}}\,h$. The (PL) 
property is implied by strong convexity, but it allows for multiple minima and does not require any convexity assumption (see \cite{KNS-2016} for instance).

\begin{lemma}\label{prop:PL} {\rm (\cite[Theorem 2]{K-1980})}
 Let $h: \mathbb{R}^{n} \rightarrow \mathbb{R}$ be a strongly quasiconvex with modulus $\gamma > 0$ and differentiable with $L$-Lipschitz continuous gradient ($L>0$). Then the (PL) property holds with modulus $\mu := \frac{
 \gamma^{2}}{2L} > 0$, that is,
 \begin{equation}\label{eq:PL}
  \lVert \nabla h(x) \rVert^{2} \geq \frac{\gamma^{2}}{2L} (h(x) -  h(\overline{x})), ~ \forall ~ x \in  \mathbb{R}^n,
 \end{equation}
 where $\overline{x} = {\rm argmin}_{\mathbb{R}^n}\, h$. 
\end{lemma}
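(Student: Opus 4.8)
The plan is to combine the first-order characterization of strong quasiconvexity in Lemma~\ref{char:gradient} with the descent lemma \eqref{descent:lemma}, exploiting the fact that the two supply estimates running in complementary directions. First I would dispose of the trivial case: since the minimization is unconstrained over $\mathbb{R}^{n}$ and $h$ is differentiable there, the global minimizer $\overline{x}$ satisfies the first-order optimality condition $\nabla h(\overline{x}) = 0$; when $x = \overline{x}$ both sides of \eqref{eq:PL} vanish, so I fix $x \neq \overline{x}$ for the remainder.

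Next I would extract a bound on the distance to the minimizer in terms of the gradient norm. Because $\overline{x}$ is the global minimizer, $h(\overline{x}) \leq h(x)$, so the implication in Lemma~\ref{char:gradient} (applied with $y = x$ and the point $\overline{x}$) is triggered and yields $\langle \nabla h(x), \overline{x} - x \rangle \leq -\tfrac{\gamma}{2}\lVert x - \overline{x}\rVert^{2}$. Bounding the left-hand side below by Cauchy--Schwarz gives $-\lVert \nabla h(x)\rVert\,\lVert x - \overline{x}\rVert \leq -\tfrac{\gamma}{2}\lVert x - \overline{x}\rVert^{2}$, and cancelling one positive factor of $\lVert x - \overline{x}\rVert$ produces the distance estimate $\tfrac{\gamma}{2}\lVert x - \overline{x}\rVert \leq \lVert \nabla h(x)\rVert$.

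Finally I would apply the descent lemma \eqref{descent:lemma} with base point $\overline{x}$; since $\nabla h(\overline{x}) = 0$, the cross term drops out and I obtain the quadratic upper bound $h(x) - h(\overline{x}) \leq \tfrac{L}{2}\lVert x - \overline{x}\rVert^{2}$ on the function gap. Substituting the distance estimate from the previous paragraph then gives $h(x) - h(\overline{x}) \leq \tfrac{L}{2}\cdot\tfrac{4}{\gamma^{2}}\lVert \nabla h(x)\rVert^{2} = \tfrac{2L}{\gamma^{2}}\lVert \nabla h(x)\rVert^{2}$, which rearranges to the claimed (PL) inequality with modulus $\mu = \gamma^{2}/(2L)$.

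The computation is short and no step is technically delicate; the only point requiring attention is the direction of the two inequalities. The descent lemma controls the gap \emph{from above} by the squared distance, whereas Lemma~\ref{char:gradient} controls the gradient norm \emph{from below} by the distance, and it is precisely this pairing that converts a growth-type estimate into the desired lower bound on $\lVert \nabla h(x)\rVert^{2}$. I do not anticipate a genuine obstacle beyond confirming that $\nabla h(\overline{x}) = 0$ is legitimately available, which it is since the problem is unconstrained.
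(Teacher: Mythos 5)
Your proof is correct, and each step is legitimate: first-order optimality gives $\nabla h(\overline{x})=0$ on the unconstrained problem; Lemma~\ref{char:gradient} applied with $h(\overline{x})\leq h(x)$ plus Cauchy--Schwarz yields the error bound $\lVert \nabla h(x)\rVert \geq \tfrac{\gamma}{2}\lVert x-\overline{x}\rVert$; and the descent lemma \eqref{descent:lemma} anchored at $\overline{x}$ gives $h(x)-h(\overline{x})\leq \tfrac{L}{2}\lVert x-\overline{x}\rVert^{2}$, which combine to the stated (PL) inequality with modulus $\gamma^{2}/(2L)$. One point of comparison worth noting: the paper itself offers no proof of this lemma --- it is imported by citation from \cite[Theorem 2]{K-1980} --- so your argument cannot be matched against an in-paper derivation; however, it is essentially the same computation the paper carries out in Subsection~\ref{subsec:3-2}, where the chain $\langle \nabla h(x), x-\overline{x}\rangle \geq \tfrac{\gamma}{2}\lVert x-\overline{x}\rVert^{2} \geq \tfrac{\gamma}{L}(h(x)-h(\overline{x}))$ is obtained from exactly the same three ingredients (\eqref{gen:char}, \eqref{descent:lemma}, and $\nabla h(\overline{x})=0$) to establish \eqref{weak:quasiconvex} with $\kappa=\gamma/L$; squaring your intermediate estimate and inserting the descent bound is all that separates the two conclusions. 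So your proposal has the added value of making the paper self-contained on this point, using only tools already present in Section~\ref{sec:02}. The only implicit hypothesis you rely on --- existence of the minimizer $\overline{x}$ --- is part of the lemma's statement (and is guaranteed by Lemma~\ref{exist:unique} with $K=\mathbb{R}^{n}$), so nothing is missing.
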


The function $h: \mathbb{R} \rightarrow \mathbb{R}$ given by $h(x)=x^{2} + 2 \sin^{2}x$ is an example of a di\-ffe\-ren\-tia\-ble strongly quasiconvex function (with modulus $\gamma = 1/2$) on $\mathbb{R}$ satisfying the (PL) property and without being convex.

\medskip

Now, let us recall the following class of functions (see \cite[Definition 1]{HSS}). Let $h: \mathbb{R}^{n} \to \mathbb{R}$ be a differentiable function. Then $h$ is said to be (strongly) quasar-convex with respect to $\overline{x} \in {\rm argmin}_{\mathbb{R}^{n}}\,h$ if there exists $\kappa \in \, ]0, 1]$ and $\gamma \geq 0$ such that
\begin{align}\label{s:quasarconvex}
 h(\overline{x}) \geq h(x) + \frac{1}{\kappa} \langle \nabla h(x), \overline{x} - x \rangle + \frac{\gamma}{2} \| \overline{x} - x\|^{2}, ~ \forall ~ x \in \mathbb{R}.
\end{align}
If $\gamma > 0$, we said that $h$ is $(\kappa, \gamma)$-strongly quasar-convex while if $\gamma = 0$ we just said that $h$ is $\kappa$-quasar-convex. 

Quasar-convex functions are a class of generalized convex functions which has been studied deeply during the last years in virtue of its applications in machine learning theory (see \cite{HADR,HSS,LV} and references therein). Furthermore, note that (strongly) quasar-convex functions include (strongly) convex functions and (strongly) star-convex functions (see \cite{NP-2006}).

Finally, we recall that for every $z_1, z_2, \ldots, z_m \in \mathbb{R}^n$, we have
\begin{align}\label{eq:lmm:X}
 \|z_1+ z_2 +\ldots +z_m\|^2 \leq m (\|z_1\|^2 + \|z_2\|^2 + \ldots + \|z_m\|^2).
\end{align} 

For a further study on generalized convexity and strongly quasiconvex functions we refer to \cite{CM-Book,GLM-Survey,HKS,HADR,HSS,Lara-9,VNC-2} and references therein.

\section{Second-Order Continuous Dynamical System with a Hessian-Driven Damping}\label{sec:03}

In this section, our goal is to investigate the problem \eqref{min.h} using a dynamical system approach. To that end, we examine the inertial dynamical system with Hessian-driven damping \eqref{eq:hessian_damped}, which we recall below:
\begin{align}
\left\{
 \begin{array}{ll}\label{eq:Dryds}
  \ddot{x}(t) + \alpha\dot{x}(t) + \beta \nabla^2 h (x(t)) \dot{x}(t) + \nabla h (x(t)) = 0, \, t > 0, \\  [2mm]
  x(0) = x_0, \quad \dot{x}(0) = v_0,
 \end{array}
 \right.
\end{align}
\noindent
where $\alpha>0$ and $\beta\geq 0$ are parameters controlling the damping and the influence of the Hessian term. This system dynamics allows us to analyze the behavior of the solution trajectory in the context of the minimization problem \eqref{min.h}.

In order to analyze the dynamical behavior of \eqref{eq:Dryds}, we will assume that there exists $\kappa \in \, ]0, +\infty[$ such that for every  trajectory $x(t)$ of  \eqref{eq:Dryds} we have
 \begin{equation}\label{weak:quasiconvex}
 \langle \nabla h(x(t)), x(t) - \overline{x} \rangle \geq \kappa (h(x(t)) - h(\overline{x})),
 \end{equation}
 where $\overline{x} = {\rm argmin}_{\mathbb{R}^n} h$. This assumption holds trivially with $\kappa =1$ for convex and strongly convex functions, and can be considered as a generalized convexity assumption. Furthermore, assumption \eqref {weak:quasiconvex} was used in \cite{CEG}, and later in \cite{AC2017,ADR2} with a stronger requirement that $\kappa \ge 1$, while it coincides with $\kappa$-quasar-convex (see relation \eqref{s:quasarconvex} with $\gamma = 0$) when $\kappa \in \, ]0, 1]$.

We will analyze sufficient conditions for differentiable strongly quasiconvex functions to satisfy relation \eqref{weak:quasiconvex} in Subsection \ref{subsec:3-2}.

\subsection{Exponential Convergence}\label{subsec:3-1}

The following result establishes exponential convergence of every trajectory to the unique solution of the minimization problem for strongly quasiconvex functions.

\begin{theorem}\label{theo:damping}
 Let $h: \mathbb{R}^n \rightarrow \mathbb{R}$ be a twice differentiable and strongly quasiconvex function with modulus $\gamma > 0$, $\overline{x} = {\rm argmin}_{\mathbb{R}^n} h$, $h^{*} := h(\overline{x})$ and $x(t)$ be a solution of \eqref{eq:Dryds}. Suppose that assump\-tion \eqref{weak:quasiconvex} holds. Then for every $\alpha  \in \, \left] 0, \sqrt{\frac{\gamma (\kappa + 2)^{2}}{8 \kappa}}\right]$ and every {\color{black} $\beta \in \, ]0, \frac{\kappa + 2}{
 \kappa \alpha}]$}, it holds
\begin{align}\label{eq:con.rate}
 \frac{\gamma}{4} \|x(t) - \overline{x}\|^2 \leq  h(x(t)) - h^{*} \leq \mathcal{C} e^{- \frac{\lambda \kappa}{2} t},
\end{align}
where $\lambda:= \frac{2\alpha}{\kappa + 2}$  and $$\mathcal{C}:= h(x_0) - h^{*} + \frac{1}{2} \| \lambda (x_0 - \overline{x}) + v_0 + \beta \nabla h(x_0)\|^2.$$
Consequently, the trajectory $x(t)$ converges exponentially to the unique solution $\overline{x}$ of problem \eqref{min.h} and the function values $h(x(t))$ converge exponentially to the optimal value $h^{*}$. 
\end{theorem}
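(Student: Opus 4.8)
The plan is to exhibit a Lyapunov (energy) function whose value at $t=0$ equals $\mathcal C$ and which decays at the exponential rate $\tfrac{\lambda\kappa}{2}$; the two-sided estimate \eqref{eq:con.rate} then follows from the growth inequality \eqref{strong min}. Concretely, I would introduce the anchor $w(t):=\lambda\,(x(t)-\overline x)+\dot x(t)+\beta\,\nabla h(x(t))$ and set
\[
E(t):=\bigl(h(x(t))-h^{*}\bigr)+\tfrac12\|w(t)\|^{2},
\]
so that $E(0)=\mathcal C$ and, since the squared norm is nonnegative, $h(x(t))-h^{*}\le E(t)$. The goal is to establish the differential inequality $\dot E(t)\le-\tfrac{\lambda\kappa}{2}E(t)$, integrate it to get $E(t)\le\mathcal C\,e^{-\lambda\kappa t/2}$, and then invoke \eqref{strong min} with $x=x(t)$ for the left-hand inequality together with the exponential bound on $E$ for the right-hand one; the exponential convergence $x(t)\to\overline x$ is immediate from $\tfrac{\gamma}{4}\|x(t)-\overline x\|^{2}\le h(x(t))-h^{*}$.

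The computational core is the derivative of $E$. First I would use that along a trajectory $\tfrac{d}{dt}\nabla h(x(t))=\nabla^{2}h(x(t))\dot x(t)$, so that \eqref{eq:Dryds} rewrites as $\tfrac{d}{dt}\bigl(\dot x+\beta\nabla h(x)\bigr)+\alpha\dot x+\nabla h(x)=0$, whence $\dot w=(\lambda-\alpha)\dot x-\nabla h(x)$. Expanding $\dot E=\langle\nabla h(x),\dot x\rangle+\langle w,\dot w\rangle$ and abbreviating $p:=x-\overline x$, $u:=\dot x$, $g:=\nabla h(x)$, one checks that in $\dot E+\tfrac{\lambda\kappa}{2}E$ the two mixed products $\langle p,u\rangle$ and $\langle u,g\rangle$ occur with coefficients proportional to $\lambda-\alpha+\tfrac{\lambda\kappa}{2}$. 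The precise choice $\lambda=\tfrac{2\alpha}{\kappa+2}$ makes this factor vanish identically, which is exactly why the momentum coefficient is taken in that form and is what lets the Hessian correction $\beta g$ be absorbed cleanly into the anchor $w$.

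After this cancellation $\dot E+\tfrac{\lambda\kappa}{2}E$ reduces to the manifestly nonpositive terms $-\tfrac{\lambda\kappa}{4}\|u\|^{2}$ and $\beta\bigl(\tfrac{\lambda\kappa\beta}{4}-1\bigr)\|g\|^{2}$, plus the sign-indefinite remainder $\tfrac{\lambda^{3}\kappa}{4}\|p\|^{2}+\bigl(\tfrac{\lambda^{2}\kappa\beta}{2}-\lambda\bigr)\langle p,g\rangle+\tfrac{\lambda\kappa}{2}\bigl(h(x)-h^{*}\bigr)$, and controlling this remainder is the main obstacle. Here strong quasiconvexity enters through three distinct consequences: assumption \eqref{weak:quasiconvex} in the form $\langle g,p\rangle\ge\kappa(h(x)-h^{*})$; the gradient characterization \eqref{gen:char} of Lemma \ref{char:gradient}, applied with $y=x(t)$ and $x=\overline x$ (legitimate since $h(\overline x)\le h(x(t))$), which gives $\langle g,p\rangle\ge\tfrac{\gamma}{2}\|p\|^{2}$; and the quadratic growth \eqref{strong min}, i.e. $\|p\|^{2}\le\tfrac{4}{\gamma}(h(x)-h^{*})$. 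The constraint $\beta\le\tfrac{\kappa+2}{\kappa\alpha}$ is equivalent to $\tfrac{\lambda\kappa\beta}{2}\le1$, making the coefficient of $\langle p,g\rangle$ nonpositive so that the lower bounds on $\langle p,g\rangle$ may be substituted; the quadratic term $\tfrac{\lambda^{3}\kappa}{4}\|p\|^{2}$ is then absorbed against $\langle p,g\rangle$ through the gradient inequality, while the bound $\alpha\le\sqrt{\gamma(\kappa+2)^{2}/(8\kappa)}$ — equivalently $\lambda^{2}\le\tfrac{\gamma}{2\kappa}$ — is precisely what forces the surviving scalar multiple of $h(x)-h^{*}$ to be $\le0$. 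The delicate point will be to balance these substitutions so that \emph{all} residual coefficients are simultaneously nonpositive over the full admissible ranges of $\alpha$ and $\beta$; once this quadratic-form estimate is secured, $\dot E\le-\tfrac{\lambda\kappa}{2}E$ holds and the theorem follows by integration.
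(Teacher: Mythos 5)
Your setup coincides with the paper's: same Lyapunov function, same computation $\dot w=(\lambda-\alpha)\dot x-\nabla h(x)$, same observation that $\lambda=\tfrac{2\alpha}{\kappa+2}$ annihilates the $\langle p,u\rangle$ and $\langle u,g\rangle$ coefficients, and your intermediate identity for $\dot E+\tfrac{\lambda\kappa}{2}E$ is correct. The gap is in the closing step, where you merge the two occurrences of $\langle p,g\rangle$ into the single coefficient $\tfrac{\lambda^{2}\kappa\beta}{2}-\lambda=-\lambda c$, with $c:=1-\tfrac{\lambda\kappa\beta}{2}$, and then substitute the lower bounds $\langle p,g\rangle\ge\tfrac{\gamma}{2}\|p\|^{2}$ and $\langle p,g\rangle\ge\kappa\bigl(h(x)-h^{*}\bigr)$ into that merged term. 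The hypothesis $\beta\le\tfrac{\kappa+2}{\kappa\alpha}$ only guarantees $c\ge0$, and $c\to0$ as $\beta$ approaches its upper endpoint. If you split $-\lambda c\langle p,g\rangle$ with weights $s$ and $1-s$ between the two lower bounds, cancelling the term $+\tfrac{\lambda\kappa}{2}\bigl(h(x)-h^{*}\bigr)$ forces $\lambda c(1-s)\kappa\ge\tfrac{\lambda\kappa}{2}$, i.e.\ $c(1-s)\ge\tfrac12$, which is impossible whenever $c<\tfrac12$, i.e.\ whenever $\beta>\tfrac{\kappa+2}{2\kappa\alpha}$; trading $\|p\|^{2}$ for $h-h^{*}$ via \eqref{strong min} only worsens the requirement to $c\ge\tfrac12+\tfrac{\lambda^{2}}{\gamma}$ (note \eqref{strong min} bounds $h-h^{*}$ from \emph{below}, so it can never help against a term in which $h-h^{*}$ carries a positive coefficient). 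At the endpoint $\beta=\tfrac{\kappa+2}{\kappa\alpha}$ one has $c=0$ and your remainder equals $\tfrac{\lambda^{3}\kappa}{4}\|p\|^{2}+\tfrac{\lambda\kappa}{2}\bigl(h(x)-h^{*}\bigr)\ge0$, so no choice of weights closes the estimate. Hence your claim that $\lambda^{2}\le\tfrac{\gamma}{2\kappa}$ ``is precisely what forces the surviving multiple of $h(x)-h^{*}$ to be $\le0$'' fails on the upper half of the admissible $\beta$-range: as written, your scheme proves the theorem only under a strictly stronger restriction on $\beta$.

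The paper avoids this by never merging the two $\langle p,g\rangle$ terms. It applies the averaged inequality \eqref{inq:key}, namely $\langle p,g\rangle\ge\tfrac{\gamma}{4}\|p\|^{2}+\tfrac{\kappa}{2}\bigl(h(x)-h^{*}\bigr)$, to the \emph{full-strength} term $-\lambda\langle p,g\rangle$ coming from $\dot{\mathcal{E}}$: the piece $-\tfrac{\lambda\kappa}{2}\bigl(h(x)-h^{*}\bigr)$ so produced exactly cancels the $h-h^{*}$ part of $\tfrac{\lambda\kappa}{2}\mathcal{E}$, while the piece $-\tfrac{\lambda\gamma}{4}\|p\|^{2}$ is held in reserve. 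The positive term $+\tfrac{\lambda^{2}\kappa\beta}{2}\langle p,g\rangle$ that later arises from expanding $\tfrac{\lambda\kappa}{4}\|v\|^{2}$ is then handled not by a lower bound but by Young's inequality, $\tfrac{\lambda^{2}\kappa\beta}{2}\langle p,g\rangle\le\tfrac{\lambda^{3}\kappa}{4}\|p\|^{2}+\tfrac{\lambda\kappa\beta^{2}}{4}\|g\|^{2}$, whose cost is absorbed by the reserved $-\tfrac{\lambda\gamma}{4}\|p\|^{2}$ (this is exactly where $\lambda^{2}\kappa\le\tfrac{\gamma}{2}$, i.e.\ the bound on $\alpha$, is used) and by the $-\beta\|g\|^{2}$ term (this is exactly where $\tfrac{\lambda\kappa\beta}{2}\le1$, i.e.\ the bound on $\beta$, is used). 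The order of operations is the whole point: bounding the negative-coefficient occurrence of $\langle p,g\rangle$ at full strength and Young-ing the positive-coefficient occurrence is what makes both conditions hold over the entire stated parameter range. If you repair your last step in this way, the remainder of your argument (integrating $\dot E+\tfrac{\lambda\kappa}{2}E\le0$ and invoking \eqref{strong min} for the left inequality in \eqref{eq:con.rate}) is exactly the paper's.
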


\begin{proof}
 We define a Lyapunov function $\mathcal{E}: [0, + \infty[ \rightarrow 
 \mathbb{R}_{+}$ as
 \begin{align}\label{eq:energyF}
  \mathcal{E}(t) := h(x(t)) - h^{*} + \frac{1}{2} 
  \| \lambda (x(t) - \overline{x}) + \dot{x}(t) + \beta \nabla h(x(t))\|^2.
 \end{align}
Set 
$$v(t) := \lambda (x(t) - \overline{x}) + \dot{x}(t) + \beta \nabla 
h(x(t)).$$ 
\noindent
Note that 
\begin{align}
 \dot{v}(t)= \lambda \dot{x}(t)+\ddot{x}(t)+ \beta \nabla^2 \,h (x(t)) 
 \, \dot{x}(t) = (\lambda - \alpha) \dot{x}(t) - \nabla h(x(t)).\notag
\end{align}

Differentiating the energy function in \eqref{eq:energyF} and combining it 
with the two above equations, we have
\begin{align}
 \dot{\mathcal{E}} (t) = & ~ \langle  \nabla h(x(t)), \dot{x} (t) \rangle 
 + \langle v(t), \dot{v} (t) \rangle  \notag \\
 = & ~ \langle  \nabla h(x(t)), \dot{x} (t) \rangle + \langle \lambda 
 (x(t) - \overline{x}) + \dot{x}(t) + \beta \nabla h(x(t)), 
 (\lambda - \alpha) \dot{x}(t) - \nabla h(x(t)) \rangle \notag \\
  = & ~ \lambda(\lambda - \alpha)\langle x(t) - 
 \overline{x}, \dot{x}(t)\rangle + (\lambda - \alpha) \|\dot{x}(t)\|^2 
 - \beta \|\nabla h(x(t))\|^2 \notag \\
 & ~ + \beta(\lambda -\alpha)\langle \nabla h(x(t)), \dot{x}(t)\rangle 
 - \lambda \langle x(t) - \overline{x}, \nabla h(x(t))\rangle. \label{eq:M1}
\end{align}
On the other hand, since for every $t> 0$, $h(x(t)) \geq h(\overline{x})$, it follows from Lemma \ref{char:gradient} and Assumption \eqref{weak:quasiconvex} that
\begin{align}\label{inq:key}
 \langle \nabla h(x(t)), x (t) - \overline{x} \rangle \geq \frac{\gamma}{4} \|x(t) - \overline{x}\|^2 + \frac{\kappa}{2} (h(x(t)) -h^*).  
\end{align}
Hence, combining \eqref{inq:key} with \eqref{eq:M1}, and then using \eqref{eq:energyF}, we obtain
\begin{align}
 \dot{\mathcal{E}} (t) \leq & ~ \lambda(\lambda - \alpha)\langle x(t) - \overline{x}, \dot{x}(t)\rangle + (\lambda 
 - \alpha) \|\dot{x}(t)\|^2 - \beta \|\nabla h(x(t))\|^2 \notag \\
 & + \beta(\lambda -\alpha)\langle \nabla h(x(t)), \dot{x}(t)\rangle 
 - \frac{\lambda \gamma}{4} \|x(t) - \overline{x}\|^2 - \frac{\lambda 
 \kappa}{2} (h(x(t)) -h^*) \notag \\
 = & ~ \lambda(\lambda - \alpha) \langle x(t) - \overline{x}, \dot{x}(t)\rangle + (\lambda 
 - \alpha) \|\dot{x}(t)\|^2 - \beta \|\nabla h(x(t))\|^2 \notag \\
 & + \beta(\lambda -\alpha)\langle \nabla h(x(t)), \dot{x}(t)\rangle 
 - \frac{\lambda\, \gamma}{4} \|x(t) - \overline{x}\|^2 
 - \frac{\lambda \kappa}{2} (\mathcal{E} (t) - \frac{1}{2} \lVert 
 v(t) \rVert^{2}), \notag 
\end{align}
that is,
\begin{align}
 \dot{\mathcal{E}} (t) + & \, \frac{\lambda \kappa}{2} \mathcal{E} (t) 
 \leq  \lambda(\lambda - \alpha) \langle x(t) - \overline{x}, \dot{x}(t)\rangle + (\lambda 
 - \alpha) \|\dot{x}(t)\|^2 - \beta \|\nabla h(x(t))\|^2 \notag \\
 & + \beta(\lambda -\alpha)\langle \nabla h(x(t)), \dot{x}(t)\rangle 
 - \frac{\lambda\, \gamma}{4} \|x(t) - \overline{x}\|^2 
 + \frac{\lambda \kappa}{4} \lVert v(t) \rVert^{2}. \notag
\end{align}
Replacing $v(t)$ in the previous equation, after developing and grouping the similar terms, we obtain
\begin{align}
 \dot{\mathcal{E}} (t) + & \, \frac{\lambda \kappa}{2} \mathcal{E} (t) 
 \leq \left(\lambda(\lambda - \alpha) +  \frac{\lambda^{2} \kappa}{2} 
 \right) \langle x(t) - \overline{x}, \dot{x}(t)\rangle + \left(\lambda 
 - \alpha + \frac{\lambda \kappa}{4} \right) \|\dot{x}(t)\|^2 \notag \\
 & + \beta \left( \frac{\lambda \kappa}{2} + \lambda -\alpha \right) 
 \langle \nabla h(x(t)), \dot{x}(t)\rangle + \left( \frac{\lambda \kappa 
 \beta^{2}}{4} - \beta \right) \|\nabla h(x(t)) \|^2 \notag \\
 & + \left( \frac{\lambda^{3} \kappa}{4} - \frac{\lambda\, \gamma }{4}\right) \|x(t) - \overline{x}\|^2 + \frac{\lambda^{2} 
 \kappa \beta}{2} \langle \nabla h(x(t)), x(t) - \overline{x} \rangle. 
 \label{eq:M2}
\end{align}
Now, since
\begin{align*}
 \frac{\lambda^{2} \kappa \beta}{2} \langle \nabla h(x(t)), x(t) - 
 \overline{x} \rangle \leq \frac{\lambda^{3} \kappa}{4} \lVert x(t) - 
 \overline{x} \rVert^{2} + \frac{\lambda \kappa \beta^{2}}{4} \lVert 
 \nabla h(x(t)) \rVert^{2},
\end{align*}
we obtain from \eqref{eq:M2}
\begin{align}
 \dot{\mathcal{E}} (t) + \frac{\lambda \kappa}{2} \mathcal{E} (t) &  \leq \lambda \left(\lambda - \alpha +  \frac{\lambda  \kappa}{2} 
 \right) \langle x(t) - \overline{x}, \dot{x}(t)\rangle + \left(\lambda 
 - \alpha + \frac{\lambda \kappa}{4} \right) \|\dot{x}(t)\|^2 \notag \\
 & + \beta \left( \lambda -\alpha +  \frac{\lambda \kappa}{2} \right) 
 \langle \nabla h(x(t)), \dot{x}(t)\rangle + \beta \left( \frac{\lambda \kappa 
 \beta }{2} - 1 \right) \|\nabla h(x(t)) \|^2 \notag \\
 & + \frac{\lambda }{2}  \left( \lambda^{2} \kappa- \frac{\gamma }{2} 
 \right) \|x(t) - \overline{x}\|^2. \label{eq:M3}
\end{align}
Since $\lambda = \frac{2 \alpha}{\kappa + 2}$, we have that $\lambda - \alpha +  \frac{\lambda  \kappa}{2}=0$, and $\lambda - \alpha + \frac{\lambda \kappa}{4}= - \frac{\kappa \alpha}{2 (\kappa + 2)} < 0$, hence the first three terms of \eqref{eq:M3} are bounded above by zero. Moreover, the assumptions on $\beta$ and $\alpha$ guarantee that the coefficients of the last two terms are negative, so their sum is also non-positive.

Therefore, from the above analysis we deduce
\begin{equation*}
 \dot{\mathcal{E}} (t) + \frac{\lambda \kappa}{2} \mathcal{E} (t) \leq 0,
\end{equation*}
which immediately implies that
 \begin{equation}\label{eq:attch}
  \mathcal{E}(t) \leq \mathcal{E} (0) \, e^{- \frac{\lambda\kappa}{2} t}. 
 \end{equation}
 Recalling the definition of $\mathcal{E}$, we deduce
 \begin{align*}
  h(x(t)) - h(\overline{x}) \leq \mathcal{E}(0) e^{- \frac{\lambda \kappa}{2} t},  
 \end{align*}
which corresponds to the second inequality in \eqref{eq:con.rate}. The first inequality follows directly from \eqref{strong min}. 

Finally, it follows from \eqref{eq:con.rate} that  $x(t)$ converges exponentially to the unique solution $\overline{x}$, and  the function values $h(x(t))$  also converge exponentially to the optimal value $h^{*}$. 
\end{proof}

\begin{remark}
 \begin{itemize}
  \item[$(i)$] As far as we know, Theorem \ref{theo:damping} is the first second-order di\-ffe\-ren\-tial equation with Hessian driven damping for studying differentiable strongly quasiconvex functions. However, in the particular case when $\alpha =\beta = 0$ (i.e., without viscous damping and Hessian terms), particular results may be found in \cite{PK} and \cite{LMV}.

  \item[$(ii)$] The case where $\beta=0$ (i.e., without the Hessian driven-damping term) falls in the Heavy Ball dynamical system considered in \cite{LMV}. Our convergence rate obtained here is comparable to that in \cite{LMV} on constant factors (see \cite[Theorem 23 and Remark 24]{LMV}).
 \end{itemize} 
\end{remark}

As a consequence of Theorem \ref{theo:damping}, we have the following convergence rates.

\begin{corollary}\label{cor.convR}
 Under the assumptions of Theorem \ref{theo:damping}, the following asymptotic exponential convergence rate, as $t \to + \infty$, holds
 \begin{align}
  & h(x(t)) - h^{*} = \mathcal{O} \left(e^{-\frac{\lambda \kappa}{2} t} \right)  ~~ \textit{and}~~\|x(t) - \overline{x}\|^{2} = \mathcal{O} \left(e^{-\frac{\lambda \kappa}{2} t}
  \right).\label{eq:rate01}
 \end{align} 

\noindent Moreover

\medskip
\centerline{
\noindent\begin{tabular}{|c|c|c|c|} 
 \hline 
 $\int_{0}^{t} e^{\lambda s} \| \nabla h(x(s)) \|^2 ds \leq \tilde{\mathcal{C}}$ 
 & $\int_{0}^{t} e^{\lambda s} \| \dot{x}(s) \|^2 ds \leq \tilde{\mathcal{C}}$
  & $\kappa \in (2, +\infty)$ \vspace{0.1cm} \\ \hline
 $e^{-\lambda t} \int_{0}^{t} e^{\lambda s} \| \nabla h(x(s)) \|^2 ds \leq \tilde{\mathcal{C}} e^{- \frac{\lambda \kappa}{2} t}$ & $e^{- \lambda t} \int_{0}^{t} e^{\lambda s} \| \dot{x}(s) \|^2 ds   \leq \tilde{\mathcal{C}} e^{-\frac{\lambda \kappa}{2} t}$ & $\kappa \in [1, 2)$ \vspace{0.1cm} 
 \\ \hline
 $ e^{-\lambda \kappa t} \int_{0}^{t} e^{\lambda \kappa s} \| \nabla h(x(s)) \|^2 ds \leq \tilde{\mathcal{C}} e^{-\frac{\lambda \kappa}{2} t} $ & $ e^{-\lambda \kappa t} \int_{0}^{t} e^{\lambda \kappa s} \| \dot{x}(s)) \|^2 ds \leq \tilde{\mathcal{C}} e^{- \frac{\lambda \kappa}{2} t} $ & $\kappa\in (0,1)$, \vspace{0.1cm}  \\ \hline
\end{tabular}
}
\medskip
\noindent where $\lambda := \frac{2\alpha}{\kappa + 2}$, $\tilde{\mathcal{C}} > 0$ and $h^{*} = h(\overline{x})$. 

\end{corollary}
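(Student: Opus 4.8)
The plan is to extract from the proof of Theorem~\ref{theo:damping} a sharper differential inequality than the one actually used there, and then feed it into an integrating-factor argument. The two rate estimates in \eqref{eq:rate01} are immediate: the bound $h(x(t))-h^{*}=\mathcal{O}(e^{-\frac{\lambda\kappa}{2}t})$ is exactly the second inequality of \eqref{eq:con.rate}, and combining it with the first inequality of \eqref{eq:con.rate} (i.e. $\frac{\gamma}{4}\|x(t)-\overline{x}\|^{2}\le h(x(t))-h^{*}$) yields $\|x(t)-\overline{x}\|^{2}=\mathcal{O}(e^{-\frac{\lambda\kappa}{2}t})$. So the real content lies in the integral estimates.

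First I would return to inequality \eqref{eq:M3} and, instead of merely discarding its right-hand side, retain the (nonpositive) coefficients. With $\lambda=\frac{2\alpha}{\kappa+2}$ the first and third terms vanish, and one is left with
\begin{equation*}
\dot{\mathcal{E}}(t)+\tfrac{\lambda\kappa}{2}\mathcal{E}(t)\le -c_{1}\|\dot{x}(t)\|^{2}-c_{2}\|\nabla h(x(t))\|^{2}-c_{3}\|x(t)-\overline{x}\|^{2},
\end{equation*}
where $c_{1}=\frac{\kappa\alpha}{2(\kappa+2)}>0$, $c_{2}=\beta\bigl(1-\frac{\lambda\kappa\beta}{2}\bigr)$ and $c_{3}=\frac{\lambda}{2}\bigl(\frac{\gamma}{2}-\lambda^{2}\kappa\bigr)\ge0$; the signs of $c_{2}$ and $c_{3}$ are guaranteed precisely by the hypotheses $\beta\le\frac{\kappa+2}{\kappa\alpha}$ and $\alpha\le\sqrt{\frac{\gamma(\kappa+2)^{2}}{8\kappa}}$ (the computation $\lambda-\alpha+\frac{\lambda\kappa}{4}=-\frac{\kappa\alpha}{2(\kappa+2)}$ being already recorded in the proof of Theorem~\ref{theo:damping}).

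Next I would multiply this inequality by the integrating factor $e^{\frac{\lambda\kappa}{2}s}$, recognize the left-hand side as $\frac{d}{ds}\bigl(e^{\frac{\lambda\kappa}{2}s}\mathcal{E}(s)\bigr)$, integrate over $[0,t]$, drop the two terms I do not need, and use $\mathcal{E}(t)\ge0$ together with $\mathcal{E}(0)=\mathcal{C}$. This produces the single master estimate
\begin{equation*}
\int_{0}^{t}e^{\frac{\lambda\kappa}{2}s}\|\nabla h(x(s))\|^{2}\,ds\le\frac{\mathcal{C}}{c_{2}},\qquad \int_{0}^{t}e^{\frac{\lambda\kappa}{2}s}\|\dot{x}(s)\|^{2}\,ds\le\frac{\mathcal{C}}{c_{1}},
\end{equation*}
from which all three rows of the table follow by elementary comparison of exponents, taking $\tilde{\mathcal{C}}:=\mathcal{C}\max\{c_{1}^{-1},c_{2}^{-1}\}$. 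When $\kappa>2$ one has $\frac{\lambda\kappa}{2}>\lambda$, so $e^{\lambda s}\le e^{\frac{\lambda\kappa}{2}s}$ and the first row is immediate. When $1\le\kappa<2$ I would write $e^{\lambda s}=e^{(\lambda-\frac{\lambda\kappa}{2})s}e^{\frac{\lambda\kappa}{2}s}$ and bound $e^{(\lambda-\frac{\lambda\kappa}{2})s}\le e^{(\lambda-\frac{\lambda\kappa}{2})t}$ (the exponent being nonnegative and $s\le t$), which after multiplying by $e^{-\lambda t}$ gives the second row. When $0<\kappa<1$ I would instead factor $e^{\lambda\kappa s}=e^{\frac{\lambda\kappa}{2}s}e^{\frac{\lambda\kappa}{2}s}$ and bound the second factor by $e^{\frac{\lambda\kappa}{2}t}$, which after multiplying by $e^{-\lambda\kappa t}$ gives the third row.

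The only genuinely delicate point is the very first step: one must notice that the proof of Theorem~\ref{theo:damping} discards exactly the information this corollary needs, and re-derive \eqref{eq:M3} keeping track of $c_{1},c_{2},c_{3}$ and verifying their signs. Once the refined inequality is in hand, the integrating-factor step and the three-way exponent comparison are entirely routine. A minor caveat worth flagging is that the gradient integrals require $c_{2}>0$, i.e. the strict bound $\beta<\frac{\kappa+2}{\kappa\alpha}$; at the endpoint $\beta=\frac{\kappa+2}{\kappa\alpha}$ only the velocity integrals survive, for which $c_{1}>0$ holds unconditionally.
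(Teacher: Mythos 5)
Your proposal is correct, but it follows a genuinely different route from the paper's own proof. The paper does not re-open the proof of Theorem~\ref{theo:damping}: it uses only its conclusion, namely the decay estimate \eqref{eq:attch}, which gives $\|\lambda(x(t)-\overline{x})+\dot{x}(t)+\beta\nabla h(x(t))\|^{2}\le 2\mathcal{C}e^{-\frac{\lambda\kappa}{2}t}$. Expanding this square, invoking assumption \eqref{weak:quasiconvex} and the identity $\langle\dot{x}(t),\beta\nabla h(x(t))+\lambda(x(t)-\overline{x})\rangle=\frac{d}{dt}\bigl(\beta(h(x(t))-h^{*})+\frac{\lambda}{2}\|x(t)-\overline{x}\|^{2}\bigr)$, the paper arrives at a differential inequality for the auxiliary function $\phi(t)=2\beta(h(x(t))-h^{*})+\lambda\|x(t)-\overline{x}\|^{2}$, which it then integrates with integrating factor $e^{\kappa\lambda t}$ (case $0<\kappa<1$) or $e^{\lambda t}$ (case $\kappa\ge1$, further split at $\kappa=2$); the gradient term enters with coefficient $\beta^{2}$ coming from the expanded square. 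You instead retain the dissipation terms that Theorem~\ref{theo:damping} discards in \eqref{eq:M3}, obtaining one master estimate $\int_{0}^{t}e^{\frac{\lambda\kappa}{2}s}\bigl(c_{1}\|\dot{x}(s)\|^{2}+c_{2}\|\nabla h(x(s))\|^{2}\bigr)ds\le\mathcal{C}$, and recover all three rows by pointwise comparison of exponents (valid since the integrands are nonnegative). Your version is more unified — a single integration and three elementary comparisons instead of a case-by-case integration — and it also yields the velocity bounds unconditionally via $c_{1}>0$. What the paper's route buys in exchange is exactly the point you flagged: since its gradient coefficient is $\beta^{2}$ rather than $c_{2}=\beta\bigl(1-\frac{\lambda\kappa\beta}{2}\bigr)$, the paper's argument covers the endpoint $\beta=\frac{\kappa+2}{\kappa\alpha}$ allowed by Theorem~\ref{theo:damping}, where your $c_{2}$ vanishes and your gradient estimates degenerate. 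Both arguments are sound on their respective parameter ranges.
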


\begin{proof}
The assertion of \eqref{eq:rate01} follows directly from \eqref{eq:con.rate}.

 From \eqref{eq:energyF} and \eqref{eq:attch} we have that there exists $\mathcal{C}>0$ such that
\begin{equation}
    \| \lambda(x(t) - \overline{x}) + \dot{x}(t) + \beta \nabla h (x(t)) \|^2 \leq   2 \mathcal{C} e^{-\frac{\lambda \kappa}{2} t}.\notag
\end{equation}
Developing the left-hand side term in the above inequality, we have
\begin{align}\label{eq:TM}
    \lambda^2 \|x(t) - \overline{x} \|^2 &+ \| \dot{x}(t)\|^2 + \beta^2 \| \nabla h(x(t))\|^2  + 2 \beta \lambda \langle x(t) - \overline{x}, \nabla  h(x(t)) \rangle  \notag \\
    &+ 2 \langle \dot{x}(t),  \beta \nabla h(x(t)) +   \lambda(x(t) - \overline{x}) \rangle \leq 2 \mathcal{C} e^{-\frac{\lambda \kappa}{2} t}.
\end{align}
Moreover, by \eqref{weak:quasiconvex} we have
\begin{equation}
 \langle x(t) - \overline{x}, \nabla h(x(t)) \rangle \geq \kappa(h(x(t)) - h(\overline{x})). \notag
\end{equation}
Also
\begin{equation}
 \langle \dot{x}(t), \beta \nabla h(x(t)) +  \lambda(x(t) - \overline{x}) \rangle = \frac{d}{dt} \Big( \beta (h(x(t)) - h(\overline{x})) + \frac{\lambda }{2}\| x(t) - \overline{x} \|^2 \Big). \notag
\end{equation}
Combining these two relations with \eqref{eq:TM} and using $\| \dot{x}(t)\|^2 
\geq 0$, we obtain
\begin{align}\label{eq:TM1}
    \lambda \Big( 2 \beta \kappa  ( h(x(t) - h(\overline{x}) ) + \lambda \| x(t) - \overline{x} \|^2 \Big) + \beta^2 \| \nabla h(x(t))\|^2 & \notag\\
    + \frac{d}{dt} \Big( 2 \beta ( h(x(t)) - h(\overline{x}) ) + \lambda \| x(t) - \overline{x} \|^2 \Big) &\leq 2\mathcal{C} e^{-\frac{\lambda\, \kappa}{2} t}.
\end{align}
Define 
\begin{align}
\phi(t) := 2\beta (h(x(t)) - h(\overline{x})) + \lambda \|x(t) - \overline{x}\|^2.\notag
\end{align}
We separate in two cases:

(i) Suppose that $0<\kappa<1$: For this regime, we observe that $\kappa \lambda \|x(t) - \overline{x}\|^2 \leq  \lambda \|x(t) - \overline{x}\|^2$. Combining this inequality with \eqref{eq:TM1} and recalling the definition of $\phi$, we have
\begin{align}
 \frac{d}{dt} \phi(t) + \kappa \lambda  \phi(t) + \beta^2 \|\nabla h(x(t))\|^2 \leq 2C e^{-\frac{\lambda k}{2} t}.\notag
\end{align}
Multiplying by $e^{\kappa \lambda t}$ to both sides of the inequality, we have
\begin{align}
 \frac{d}{dt} (e^{\kappa \lambda t} \phi(t)) + \beta^2 e^{\kappa \lambda t} \|\nabla h(x(t)) \|^2 \leq 2 C e^{\frac{\lambda k}{2} t}. \notag
\end{align}
Integrating, we obtain
\begin{align}
 e^{\kappa \lambda t} \phi(t) - \phi(0)+ \beta^2 \int_{0}^{t} e^{\kappa \lambda s} \|\nabla h(x(s))\|^2  ds \leq \frac{4C}{\kappa \lambda} (e^{\frac{\lambda k}{2} t} -1). \notag
\end{align}
Since $\phi(t) \geq 0$, we have 
\begin{align*}
 e^{-\lambda \kappa t} \beta^{2} \int_{0}^{t} e^{\lambda \kappa s} \| 
 \nabla h(x(s)) \|^2 ds & \leq \frac{4 \mathcal{C}}{\kappa \lambda} 
 e^{-\frac{ \lambda k}{2} t} + \phi(0)e^{- \kappa \lambda t} \\ 
 & = \left( \frac{4 \mathcal{C}}{\kappa \lambda} + \phi(0) e^{- \frac{\lambda k}{2} t} \right) e^{-\frac{\lambda k}{2} t}.
\end{align*}
 (ii) Suppose that $\kappa\geq 1$: Note that $\beta ( h(x(t) - h(\overline{x})) \leq  \kappa \beta ( h(x(t) - h(\overline{x}))$. Then \eqref{eq:TM1} reduces to,
\begin{align}
 \frac{d}{dt} \phi(t) + \lambda \phi(t) + \beta^2 \|\nabla h(x(t)) \|^2 \leq 2 C e^{-\frac{\lambda k}{2} t}.\notag
\end{align}
Integrating,
\[
e^{ \lambda t} \phi(t) - \phi(0)+ \beta^2 \int_{0}^{t}e^{\lambda s} \|\nabla h(x(s))\|^2  ds \leq 2 \mathcal{C} \int_{0}^{t} (e^{(1-\frac{\kappa}{2})\lambda s} ds = \frac{2 \mathcal{C}}{1 - \frac{\kappa}{2}} (e^{(1 - \frac{\kappa}{2})\lambda t} -1).
\]
If $\kappa\in [1, 2)$, then
\begin{align*}
 e^{-\lambda t} \beta^{2} \int_{0}^{t} e^{\lambda s} \| \nabla h(x(s)) \|^2 ds & \leq 
 \frac{4 \mathcal{C}}{2 - \kappa}e^{- \frac{\kappa \lambda}{2} t} + \phi(0) 
 e^{- \lambda t} \\ 
 & = \left( \frac{4 \mathcal{C}}{2 - \kappa} + \phi(0)e ^{- \frac{2-\kappa}{2} \lambda t} \right) e^{- \frac{\kappa \lambda}{2} t}.
\end{align*}
If $\kappa>2$, then
\begin{align*}
 e^{-\lambda t} \beta^{2} \int_{0}^{t} e^{\lambda s} \| \nabla h(x(s)) \|^2 ds \leq 
 \left( \frac{4 \mathcal{C}}{\kappa -2} + \phi(0) \right) e^{-\lambda t}.
\end{align*}
To estimate \(\|\dot{x}(t)\|\), we basically follow the same steps as before, but this time we drop the nonnegative term $\beta^2 \| \dot{x}(t)\|^2$ from \eqref{eq:TM}. 
\end{proof}

\begin{remark}
 The relations in Corollary \ref{cor.convR} indicate that \(\|\nabla h(x(t))\|^2\) and \(\|\dot{x}(t)\|^2\) decay to zero at an exponential rate on average. For strongly convex functions, a similar convergence rate was established in \cite[Theorem 7]{ACFR} considering a specific damping term equal to $2 \sqrt{\gamma}$, where $\gamma$ is the strong convexity parameter. In contrast, Theorem \ref{theo:damping} and Corollary \ref{cor.convR} allow a free damping term, providing greater flexibility in its selection.
\end{remark}

\subsection{Sufficient Conditions}\label{subsec:3-2}

Theorem \ref{theo:damping} requires differentiable strongly quasiconvex functions which satisfy relation \eqref{weak:quasiconvex} ($\kappa>0$), which holds in particular when the function is quasar-convex ($\kappa \in \, ]0,1]$).

First, note that if the function $h: \mathbb{R}^{n} \rightarrow \mathbb{R}$ is
differentiable with $L$-Lipschitz gradient and strongly quasiconvex with
modulus $\gamma> 0$, then it follows from \eqref{gen:char},
\eqref{descent:lemma} and $\nabla h(\overline{x}) = 0$ that
\[
\langle \nabla h(x), x - \overline{x} \rangle \geq \frac{\gamma}{2} \lVert x -
\overline{x} \rVert^{2} \geq \frac{\gamma}{L} (h(x) - h(\overline{x})),
\]
which implies \eqref{weak:quasiconvex} with $\kappa= \frac{\gamma}{L}$.

Second, and as noted in \cite{LV}, differentiable strongly quasi-convex
functions are quasar-convex on bounded convex sets by \cite[Theorem 5.2]{LV},
but this inclusion does not hold on unbounded sets even in one dimension, as
the following example shows.

\begin{example}
\label{rem:compare}Take $h_{1},h_{2}:\mathbb{R}\rightarrow \mathbb{R}$ given by
$h_{2}(x)=x^{2}$ and by
\[
h_{1}(x)=\left \{
 \begin{array}[c]{ll}
 3ex^{2}-2e\left \vert x\right \vert ^{3},~~ & \mathrm{if} ~~ \lvert x \rvert 
 < 1, \\
 e^{n}(1+(e-1)(3(\left \vert x\right \vert -n)^{2}-2(\lvert x \rvert
 -n)^{3})), & n\leq \lvert x \rvert <n+1, ~ \forall~n \in \mathbb{N}.
 \end{array}
 \right.
\]
Note that $h_{1}$ is even, $h_{1}\left(  n\right)  =e^{n}$ and $h_{1}^{\prime
}\left(  n\right)  =0$ for $n\in \mathbb{N}$, $h_{1}\left(  0\right)
=h_{1}^{\prime}\left(  0\right)  =0$, and $h_{1}$ is increasing on
$\mathbb{R}_{+}$.

Then we define $h:\mathbb{R}\rightarrow \mathbb{R}$ as follows:
\[
h(x)=h_{1}(x)+h_{2}(x).
\]
Since $h_{2}$ is strongly convex (in particular, strongly quasiconvex) with
modulus $\gamma_{2}=2$ (see \eqref{strong:convex}) and $h_{1}$ is nondecreasing on
$\mathbb{R}_{+}$, the function $h=h_{1}+h_{2}$ is strongly quasiconvex with
modulus $\gamma_{2}=2$. Indeed, let $\left \vert x\right \vert \leq \left \vert y \right \vert $. Then we have $\left \vert (1-\lambda)x + \lambda y\right \vert \leq \left \vert y\right \vert $ for all $\lambda \in \lbrack0,1]$ and thus
$h_{1}((1-\lambda)x+\lambda y)=h_{1}\left(  \left \vert (1-\lambda)x+\lambda
y\right \vert \right)  \leq h_{1}(\left \vert y\right \vert )=h_{1}\left(
y\right)  $ for all $\lambda \in [0,1]$. Hence,
\begin{align}
h((1-\lambda)x+\lambda y)  &  =h_{1}((1-\lambda)x+\lambda y)+h_{2}%
((1-\lambda)x+\lambda y)\nonumber \\
&  \leq h_{1}(y)+h_{2}(y)-\lambda(1-\lambda)\frac{2}{2}(y-x)^{2}%
\label{eq:ce}\\
&  =h(y)-\lambda(1-\lambda)(y-x)^{2}\nonumber \\
&  =\max \left \{  h(x),h(y)\right \}  -\lambda(1-\lambda)(y-x)^{2}, ~ \forall~\lambda \in [0, 1],\nonumber
\end{align}
Therefore, $h$ is strongly quasiconvex with modulus $\gamma_{2} = 2$.

On the other hand, observe that $h$ is not even quasar-convex for any choice
of the parameters $\kappa \in \,]0,1]$ and $\gamma \geq0$ (see \cite{HSS}).
Indeed, since $h$ is di\-ffe\-ren\-tia\-ble, $h(0)=0$, it follows from 
\eqref{s:quasarconvex} that if $h$ were (strongly) quasar-convex, we would 
have (for $x^{\ast}=0 \in \mathrm{argmin}_{\mathbb{R}}\,h$) that
\[
\frac{1}{\kappa} h^{\prime}(x) \geq \frac{h(x)}{x}+\frac{\gamma}{2}%
x,~~\forall~x>0.
\]
Taking $x=n\in \mathbb{N}$, we have $h^{\prime}(n)=h_{1}^{\prime}(n)=2n$ and
$h(n)=e^{n}+n^{2}>e^{n}$, thus
\[
\frac{1}{\kappa}2\geq \frac{e^{n}}{n^{2}}+\frac{\gamma}{2},~\forall
~n\in \mathbb{N},
\]
which is impossible even for $\gamma=0$. Therefore, $h$ is not quasar-convex.
\end{example}

A sufficient condition for differentiable strongly quasiconvex functions for being quasar-convex on unbounded domains, i.e., for strongly quasiconvex functions to satisfy relation \eqref{weak:quasiconvex}, is given below.

\begin{proposition}
\label{the:prop} Let $K$ be a closed and convex set in $\mathbb{R}^{n}$, $h$ be a differentiable strongly quasiconvex function with modulus $\gamma>0$ and $\overline{x}$ its unique minimizer. If
\begin{align}
 \limsup_{x \in K,\, \lVert x\rVert \rightarrow + \infty} \frac{h(x)}{\lVert x \rVert^{2}} & < + \infty, \label{not:hypercoer} \\
 \limsup_{x \in K,\, \lVert x - \bar{x} \rVert \rightarrow 0} \frac{h(x) - h(\bar{x})}{\lVert x-\bar{x} \rVert^{2}} & < +\infty, \label{not:hypercoer'}
\end{align}
then there exists $\kappa \in \,]0,1]$ such that
\begin{equation}\label{wanted:prop}
 \kappa(h(y) - h(\overline{x})) \leq \langle \nabla h(y), y - \overline{x} \rangle, ~ \forall ~ y \in K, 
\end{equation}
that is, $h$ is quasar-convex on $K$ with modulus $\kappa \in \,]0,1]$ (i.e., satisfies relation \eqref{weak:quasiconvex}).
\end{proposition}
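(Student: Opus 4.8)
The plan is to reduce the target inequality (\ref{wanted:prop}) to a single \emph{quadratic growth} estimate and then extract that estimate from the two $\limsup$ hypotheses together with a compactness argument. First I would observe that strong quasiconvexity already supplies the matching lower bound on the inner product: applying the gradient characterization (\ref{gen:char}) of Lemma \ref{char:gradient} to the pair $(\overline{x}, y)$ and using $h(\overline{x}) \leq h(y)$ gives
\begin{equation*}
 \langle \nabla h(y), y - \overline{x} \rangle \geq \frac{\gamma}{2} \|y - \overline{x}\|^2, \quad \forall \, y \in K.
\end{equation*}
Consequently, to obtain (\ref{wanted:prop}) it suffices to produce a constant $C > 0$ with $h(y) - h(\overline{x}) \leq C \|y - \overline{x}\|^2$ for all $y \in K$: then $\kappa := \min\{1, \gamma/(2C)\} \in \,]0,1]$ satisfies $\kappa (h(y) - h(\overline{x})) \leq \kappa C \|y - \overline{x}\|^2 \leq \frac{\gamma}{2}\|y - \overline{x}\|^2 \leq \langle \nabla h(y), y - \overline{x}\rangle$, where $h(y) \geq h(\overline{x})$ because $\overline{x}$ is the unique minimizer.

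The core of the proof is therefore the quadratic upper bound, which I would establish by splitting $K$ according to $r(y) := \|y - \overline{x}\|$. Near the minimizer, hypothesis (\ref{not:hypercoer'}) provides $\delta, C_1 > 0$ with $h(y) - h(\overline{x}) \leq C_1 r(y)^2$ whenever $r(y) < \delta$. Far away, hypothesis (\ref{not:hypercoer}) provides $R, C_2 > 0$ with $h(y) \leq C_2 \|y\|^2$ whenever $\|y\| > R$; since $r(y) \to +\infty$ is equivalent to $\|y\| \to +\infty$ and $\|y\|^2 \leq 2 r(y)^2 + 2\|\overline{x}\|^2$ by (\ref{eq:lmm:X}), the resulting constant and lower-order contributions can be folded into the quadratic once $r(y) \geq R'$ for a suitable $R'$, giving $h(y) - h(\overline{x}) \leq C_3 r(y)^2$ there. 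On the remaining intermediate set $K \cap \{\delta \leq r(y) \leq R'\}$, which is closed and bounded hence compact, the continuous function $h$ attains a finite maximum $M \geq h(\overline{x})$, and since $r(y)^2 \geq \delta^2$ there, $h(y) - h(\overline{x}) \leq (M - h(\overline{x}))\delta^{-2} r(y)^2$. Taking $C$ to be the maximum of the three constants yields the global estimate, and the reduction above finishes the proof.

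I expect the main obstacle to be the bookkeeping in the far-field region: hypothesis (\ref{not:hypercoer}) is stated in terms of $\|y\|$ rather than the centered quantity $r(y)$, and $h(\overline{x})$ may have either sign, so some care is needed to fold every constant and lower-order contribution into a single quadratic term while keeping the constant $C$ uniform over the three regions. Conceptually this is exactly where the hypotheses are used: they rule out super-quadratic growth at infinity and at the minimizer, and Example \ref{rem:compare} shows that dropping (\ref{not:hypercoer}) allows a strongly quasiconvex function growing like $e^{\|x\|}$ which fails to be quasar-convex. Everything else is an elementary combination of these three local bounds with the strong-quasiconvexity lower inequality.
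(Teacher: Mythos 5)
Your proof is correct, but it follows a genuinely different route from the paper's. The paper argues by contradiction: assuming \eqref{wanted:prop} fails for every $\kappa=2/\ell$, it extracts a sequence $\{y_\ell\}_\ell\subset K$ along which, by the strong quasiconvexity inequality \eqref{gen:char}, the ratio $(h(y_\ell)-h(\overline{x}))/\lVert y_\ell-\overline{x}\rVert^{2}$ blows up to $+\infty$; it then splits into the case $\lVert y_\ell\rVert\to+\infty$, which contradicts \eqref{not:hypercoer}, and the case where $\{y_\ell\}_\ell$ is bounded, where a convergent subsequence is forced to converge to $\overline{x}$ and the blow-up contradicts \eqref{not:hypercoer'}. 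You instead proceed directly: you isolate the global quadratic growth estimate $h(y)-h(\overline{x})\leq C\lVert y-\overline{x}\rVert^{2}$ on $K$ as the key intermediate statement, prove it by the near/intermediate/far three-region decomposition (hypothesis \eqref{not:hypercoer'} near $\overline{x}$, hypothesis \eqref{not:hypercoer} at infinity, and compactness plus continuity on the middle annulus), and then combine it with the same lower bound $\langle\nabla h(y),y-\overline{x}\rangle\geq\frac{\gamma}{2}\lVert y-\overline{x}\rVert^{2}$ from Lemma \ref{char:gradient} to obtain an explicit modulus $\kappa=\min\{1,\gamma/(2C)\}$. Both arguments rest on the same two ingredients (the gradient characterization of strong quasiconvexity and compactness of closed bounded sets in $\mathbb{R}^{n}$), so the mathematical content is close; what your version buys is constructiveness and a cleaner conceptual picture: it exhibits $\kappa$ in terms of a computable growth constant $C$ and makes transparent that the two $\limsup$ hypotheses serve exactly to guarantee quadratic growth of $h$ relative to $\overline{x}$ on $K$, whereas the paper's contradiction argument is shorter to write but yields no quantitative information on $\kappa$. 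The far-field bookkeeping you flag is indeed harmless: choosing $R'\geq\max\{1,\,R+\lVert\overline{x}\rVert\}$ guarantees $\lVert y\rVert\geq R$ whenever $\lVert y-\overline{x}\rVert\geq R'$, and then
\begin{equation*}
 h(y)-h(\overline{x})\;\leq\;2C_{2}\lVert y-\overline{x}\rVert^{2}+\bigl(2C_{2}\lVert\overline{x}\rVert^{2}+\lvert h(\overline{x})\rvert\bigr)\lVert y-\overline{x}\rVert^{2},
\end{equation*}
so a single uniform constant suffices over the three regions.
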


\begin{proof}
Suppose to the contrary that $h$ does not satisfy \eqref{wanted:prop}. Then for every $\kappa \in \,]0,1]$, there exists $y_{\kappa}\in K$ such that
\[
\kappa(h(y_{\kappa})-h(\overline{x}))>\langle \nabla h(y_{\kappa}),y_{\kappa
}-\overline{x}\rangle.
\]
For every $\ell \in \mathbb{N}$ ($\ell \geq 2)$), we take $\kappa = \frac{2}{\ell} \in \, ]0,1]$. Then there exists $y_{\ell} \in K$ such that
\[
\frac{2}{\ell} (h(y_{\ell}) - h(\overline{x})) > \langle \nabla h(y_{\ell}), y_{\ell} - \overline{x} \rangle.
\]
Since $h$ is strongly quasiconvex with modulus $\gamma>0$, it follows from
Lemma \ref{char:gradient} that
\begin{align}
&  \frac{2}{\ell}(h(y_{\ell})-h(\overline{x})) > \langle \nabla h(y_{\ell}), y_{\ell} - \overline{x} \rangle \geq \frac{\gamma}{2} \lVert y_{\ell} - \overline{x}\rVert^{2} \nonumber \\
&  \Longrightarrow \,h(y_{\ell})-h(\overline{x})>\frac{\gamma \ell}{4}\lVert
y_{\ell} -\overline{x}\rVert^{2}\nonumber \\
&  \Longleftrightarrow \, \frac{h(y_{\ell})-h(\overline{x})}{\lVert y_{\ell} - \overline{x} \rVert^{2}} >\frac{\gamma \ell}{4}, ~ \forall~\ell \in \mathbb{N} \nonumber \\
&  \Longrightarrow \, \lim_{\ell \rightarrow+\infty} \frac{h(y_{\ell}) - 
h(\overline{x})}{\lVert y_{\ell} - \overline{x} \rVert^{2}} = + \infty.
 \label{for:conclu}
\end{align}
If $\lVert y_{\ell}\rVert \rightarrow+\infty$ for some subsequence, then
$h(y_{\ell})\rightarrow+\infty$, too, and relation \eqref{for:conclu}
contradicts assumption \eqref{not:hypercoer}. Therefore, $\{y_{\ell}\}_{\ell}$
is bounded and there exists a subsequence, that we denote again by $\{y_{\ell
}\}_{\ell}$, such that $y_{\ell}\rightarrow \widehat{y}\in K$. Using
\eqref{for:conclu} and since $h$ is continuous, we conclude that $\widehat
{y}=\overline{x}$. But then \eqref{for:conclu} again contradicts
\eqref{not:hypercoer'}. Therefore, \eqref{wanted:prop} holds for some
$\kappa \in \,]0,1]$.
\end{proof}

\begin{corollary}
Let $K$ be a closed and convex set in $\mathbb{R}^{n}$, $h$ be a
differentiable strongly quasiconvex function with modulus $\gamma>0$ and
$\overline{x}\in \mathrm{int}\,K$ be its unique minimizer. If $h$ is twice
differentiable at $\bar{x}$ and \eqref{not:hypercoer} holds, then $h$ is 
quasar-convex (i.e., satisfies relation \eqref{weak:quasiconvex}).
\end{corollary}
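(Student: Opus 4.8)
The plan is to reduce the corollary to Proposition \ref{the:prop}. The hypothesis \eqref{not:hypercoer} is assumed directly, so it suffices to show that twice differentiability of $h$ at $\overline{x}$, together with $\overline{x} \in \mathrm{int}\,K$, forces the local growth condition \eqref{not:hypercoer'}; once both \eqref{not:hypercoer} and \eqref{not:hypercoer'} hold, the Proposition yields the desired $\kappa \in \,]0,1]$ and hence \eqref{weak:quasiconvex}.

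First I would invoke that $\overline{x}$ is the minimizer of $h$ and lies in the interior of $K$, so the first-order optimality condition gives $\nabla h(\overline{x}) = 0$. Since $h$ is twice differentiable at $\overline{x}$, I would then use the second-order Taylor expansion with Peano remainder,
\begin{equation*}
 h(x) - h(\overline{x}) = \langle \nabla h(\overline{x}), x - \overline{x} \rangle + \tfrac{1}{2} \langle \nabla^2 h(\overline{x})(x - \overline{x}), x - \overline{x} \rangle + o(\|x - \overline{x}\|^2),
\end{equation*}
valid as $x \to \overline{x}$. Substituting $\nabla h(\overline{x}) = 0$ and dividing by $\|x - \overline{x}\|^2$, the quotient $\frac{h(x) - h(\overline{x})}{\|x - \overline{x}\|^2}$ equals $\tfrac{1}{2}$ times a Rayleigh quotient of $\nabla^2 h(\overline{x})$ plus a term of the form $o(\|x - \overline{x}\|^2)/\|x - \overline{x}\|^2$.

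The Rayleigh quotient is bounded above by $\tfrac{1}{2}\lambda_{\max}(\nabla^2 h(\overline{x}))$, the largest eigenvalue of the symmetric Hessian, while the remainder term vanishes as $x \to \overline{x}$. Taking $\limsup$ over $x \in K$ with $\|x - \overline{x}\| \to 0$ therefore produces a finite bound, which is exactly \eqref{not:hypercoer'}; Proposition \ref{the:prop} then applies and completes the proof. The only delicate point is justifying the Peano-remainder expansion from twice differentiability at the single point $\overline{x}$ (as opposed to on a full neighborhood), which is a standard consequence of the pointwise definition of the Hessian; the interior assumption $\overline{x} \in \mathrm{int}\,K$ is what guarantees both that $\nabla h(\overline{x}) = 0$ and that $x$ may approach $\overline{x}$ from all directions while remaining in $K$.
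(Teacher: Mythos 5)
Your proposal is correct and follows essentially the same route as the paper: both reduce the statement to Proposition \ref{the:prop} by verifying \eqref{not:hypercoer'} via a second-order Taylor (Peano) expansion at $\overline{x}$, using $\nabla h(\overline{x})=0$ and bounding the quadratic term by the largest eigenvalue of $\nabla^{2}h(\overline{x})$ (the paper's constant $\beta$ is exactly this Rayleigh-quotient maximum). Your added remark about the $\tfrac{1}{2}$ factor and the validity of the Peano remainder under merely pointwise twice differentiability is a correct refinement of the same argument, not a different one.
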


\begin{proof}
We only need to prove that relation \eqref{not:hypercoer'} holds. Indeed, let
\[
\beta:= \max \left \{  \left \langle \nabla^{2}h\left(  \bar{x}\right)
v,v\right \rangle :\left \Vert v\right \Vert =1\right \}  .
\]
By Taylor's formula, using $\nabla h\left(  \bar{x}\right)  =0$, we obtain for
$y$ close to $\bar{x}$ that
\[
h\left(  y\right)  -h\left(  \bar{x}\right)  =\left \langle \nabla^{2}h\left(
\bar{x}\right)  \left(  y-\bar{x}\right)  ,y-\bar{x}\right \rangle +o\left(
\left \Vert y-\bar{x}\right \Vert ^{2}\right)  \leq \beta \left \Vert y-\bar
{x}\right \Vert ^{2}+o\left(  \left \Vert y-\bar{x}\right \Vert ^{2}\right)
\]
where%
\[
\lim_{y\rightarrow \bar{x}}\frac{o\left(  \left \Vert y-\bar{x}\right \Vert
^{2}\right)  }{\left \Vert y-\bar{x}\right \Vert ^{2}}=0.
\]
Hence,
\[
\limsup_{y\rightarrow \bar{x}}\frac{h\left(  y\right)  -h\left(  \bar
{x}\right)  }{\left \Vert y-\bar{x}\right \Vert ^{2}}\leq \beta.
\]
Then, \eqref{not:hypercoer'} holds. Therefore, the result follows by applying
Proposition \ref{the:prop}.
\end{proof}

Proposition \ref{the:prop} is not true if assumption \eqref{not:hypercoer} does not hold (see Example \ref{rem:compare} where $h$ is twice differentiable at $\bar{x}=0$). It is also not true if the assumption that $h$ is twice differentiable at $\bar{x}$ does not hold, as the next example shows.

\begin{example}
We define first the function $g(x) = \frac{1 - \cos (\pi x)}{2}$, $0\leq x\leq1$. 
This function is differentiable, increasing on $[0, 1]$, and has the properties:
$g(0)=0$, $g(1)=1$, $g^{\prime}(0)=g^{\prime}(1)=0$. Then, we define 
the function $f: \mathbb{R}\rightarrow \mathbb{R}$ by%
\[
 f(x) = \left \{
 \begin{array}[c]{cc}%
 0 & x=0. \\
 \left(  \frac{1}{n^{\frac{3}{2}}} - \frac{1}{\left(  n+1\right)  ^{\frac{3}{2}}}
 \right)  g\left(  n\left(  n+1\right)  \left \vert x\right \vert -n\right)
 + \frac{1}{(n+1)^{\frac{3}{2}}} & \frac{1}{n+1} < \lvert x \rvert \leq 
 \frac{1}{n}, \\
 1 & 1 < \lvert x \rvert.
\end{array}
\right.
\]
Note that $f$ has the following properties: it is even, increasing on $[0,+\infty
\lbrack$ thus quasiconvex, differentiable, $f(\frac{1}{n})=\frac{1}{n^{3/2}}$,
$f^{\prime}(\frac{1}{n})=0$ for all $n\in \mathbb{N}$, $f^{\prime}(0)=0$. Then
we define $h:\mathbb{R}\rightarrow \mathbb{R}$ by
\[
 h(x)=x^{2}+f(x).
\]
The function $h$ is differentiable and satisfies \eqref{not:hypercoer}. It
also strongly quasiconvex; indeed, since $f$ is quasiconvex and $x^{2}$ is
strongly quasiconvex with modulus $2$, for each $x,y\in \mathbb{R}$ with, say,
$\lvert x\rvert \leq \lvert y\rvert$ and $z=\lambda x+(1-\lambda)y$, we observe
that $f(z)\leq f(y)$ and $z^{2}\leq y^{2}-\lambda(1-\lambda)\lVert
y-x\rVert^{2}$, so $h(z)\leq h(y)-\lambda(1-\lambda)\lVert y-x\rVert^{2}$.

However, $h$ does not satisfy \eqref{wanted:prop} for any choice of $\kappa>
0$. In fact, assuming that \eqref{wanted:prop} holds for some $\kappa$ leads
to
\begin{gather*}
\forall~ n \in \mathbb{N}, \quad \kappa \left(  \frac{1}{n^{2}} + \frac
{1}{n^{3/2}} \right)  \leq \frac{2}{n}\frac{1}{n} ~ \Longrightarrow~
\kappa \left(  1 + n^{1/2} \right)  \leq2.
\end{gather*}
This clearly cannot hold for every $n \in \mathbb{N}$. Therefore, $h$ is not quasar-convex.
\end{example}

\section{Hessian Driven Methods}\label{sec:04}

In this section, we prove that temporal discretizations of \eqref{eq:Dryds} 
yield accelerated gradient-type methods, namely the Heavy Ball and Nesterov 
acceleration me\-thods, where the exponential convergence of the continuous
system translates into linear convergence (in both cases).

\subsection{Heavy Ball Acceleration}


Let us consider the Heavy Ball acceleration method with Hessian correction \eqref{eq:heavy_ball_corrected} and constant step sizes, that is,
\begin{align}
\left\{
 \begin{array}{ll}\label{inertialHess:alg}
  ~~~\, y_{k} = x_{k} +   \alpha (x_{k} - x_{k - 1}) - \theta \left(\nabla h(x_k) - \nabla  h(x_{k-1})\right), \\ [2mm]
  x_{k+1} = y_{k} - \beta \nabla h(x_{k}),
 \end{array}
 \right.
\end{align}
where $\alpha \in [0,1]$, $\theta \geq 0$, and $\beta >0$.

In the following theorem, we provide a linear convergence rate for the sequence generated by \eqref{inertialHess:alg} to the optimal solution and the optimal value of pro\-blem \eqref{min.h}.

\begin{theorem}\label{convergenceHeavyball}
 Let $h: \mathbb{R}^n \rightarrow \mathbb{R}$ be a differentiable strongly quasiconvex function with modulus $\gamma > 0$ with $L$-Lipschitz continuous gradient and $\overline{x} \in {\rm argmin}_{\mathbb{R}^n} h$. Suppose that
 \begin{equation}\label{param:hb}
  \alpha \in \,  \left[0, \frac{\sqrt{2}}{2}\right[, ~~ \theta \in \, \left[0, \frac{\alpha}{L \sqrt{3}} \right[, ~~ \beta > \theta (\sqrt{2}-1), ~~ \theta + \beta  \in \, \left]0, \frac{1 - 2 \alpha^{2}}{L} \right].
 \end{equation} 
 Then the se\-quen\-ce $\{x_k\}_{k}$, generated by the Heavy Ball method
 \eqref{inertialHess:alg}, converges linearly to $\overline{x}$ and the sequence $\{h(x_k)\}_{k}$ converges linearly to the optimal value $h^{*} = 
 h(\overline{x})$.
\end{theorem}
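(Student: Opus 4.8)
The plan is to carry out, at the discrete level, the Lyapunov analysis used in Theorem \ref{theo:damping}: I would build an energy sequence $E_k$ that dominates $h(x_k)-h^{*}$ and $\|x_k-\overline{x}\|^2$ and show it contracts, $E_{k+1}\le \rho E_k$ for some fixed $\rho\in\,]0,1[$. First I would eliminate the intermediate point $y_k$ from \eqref{inertialHess:alg} to obtain the single three-term recursion
\[
x_{k+1}-x_k=\alpha(x_k-x_{k-1})-\theta\bigl(\nabla h(x_k)-\nabla h(x_{k-1})\bigr)-\beta\nabla h(x_k),
\]
which is precisely the time-discretization of \eqref{eq:Dryds}. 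Applying the descent lemma \eqref{descent:lemma} to $h(x_{k+1})$ gives $h(x_{k+1})\le h(x_k)+\langle\nabla h(x_k),x_{k+1}-x_k\rangle+\tfrac{L}{2}\|x_{k+1}-x_k\|^2$; the finite-difference term is then controlled by $L$-Lipschitzness through $\|\nabla h(x_k)-\nabla h(x_{k-1})\|\le L\|x_k-x_{k-1}\|$, and the squared step $\|x_{k+1}-x_k\|^2$ is split by the three-term inequality \eqref{eq:lmm:X} (with $m=3$), converting the $\alpha$- and $\theta$-contributions into multiples of $\|x_k-x_{k-1}\|^2$ and the $\beta$-contribution into a multiple of $\|\nabla h(x_k)\|^2$.

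Next I would introduce a discrete energy of the form $E_k:=\bigl(h(x_k)-h^{*}\bigr)+\tfrac{c}{2}\|x_k-x_{k-1}\|^2$, possibly in the fuller form $\tfrac12\|c_1(x_k-\overline{x})+(x_k-x_{k-1})+\beta\nabla h(x_k)\|^2$ mirroring $\mathcal{E}$ in \eqref{eq:energyF}, and estimate $E_{k+1}-\rho E_k$. Two structural facts for strongly quasiconvex functions with Lipschitz gradient drive the contraction. The gradient characterization of Lemma \ref{char:gradient} with $\nabla h(\overline{x})=0$ yields $\langle\nabla h(x_k),x_k-\overline{x}\rangle\ge\tfrac{\gamma}{2}\|x_k-\overline{x}\|^2$ (exactly as derived at the start of Subsection \ref{subsec:3-2}), and the Polyak-{\L}ojasiewicz estimate of Lemma \ref{prop:PL} converts the negative term $-\beta\|\nabla h(x_k)\|^2$ arising from the descent step into $-\beta\tfrac{\gamma^2}{2L}\bigl(h(x_k)-h^{*}\bigr)$, which supplies the geometric decay of the function-value part of $E_k$. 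All remaining mixed quantities $\langle\nabla h(x_k),x_k-x_{k-1}\rangle$ and $\langle x_k-\overline{x},x_k-x_{k-1}\rangle$ are absorbed into the diagonal terms via Young's inequality.

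The core of the argument, and the step I expect to be the main obstacle, is verifying that the parameter window \eqref{param:hb} makes the resulting quadratic form in the quantities $h(x_k)-h^{*}$, $\|x_k-\overline{x}\|^2$, $\|x_k-x_{k-1}\|^2$ and $\|\nabla h(x_k)\|^2$ sufficiently negative to force $E_{k+1}\le\rho E_k$ with a single $\rho<1$. The constraints then become transparent: $\alpha<\tfrac{\sqrt2}{2}$ (equivalently $1-2\alpha^2>0$) together with $\theta+\beta\le\tfrac{1-2\alpha^2}{L}$ keeps the descent-lemma coefficient from overwhelming the momentum energy; $\theta<\tfrac{\alpha}{L\sqrt3}$ keeps the Hessian-correction contribution (scaled by the factor $3$ from \eqref{eq:lmm:X}) below the momentum contribution; and $\beta>\theta(\sqrt2-1)$ secures a net negative gradient coefficient. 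Balancing these simultaneously so that the off-diagonal terms stay dominated is the delicate bookkeeping. Once $E_{k+1}\le\rho E_k$ holds, iterating gives $E_k\le\rho^{k}E_0$; since $E_k\ge h(x_k)-h^{*}\ge\tfrac{\gamma}{4}\|x_k-\overline{x}\|^2$ by \eqref{strong min}, both $\{h(x_k)-h^{*}\}_k$ and $\{\|x_k-\overline{x}\|^2\}_k$ converge linearly to zero, which is the assertion.
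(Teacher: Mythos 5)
Your overall strategy is the same as the paper's: rewrite \eqref{inertialHess:alg} as the three-term recursion $x_{k+1}-x_k=\alpha(x_k-x_{k-1})-(\theta+\beta)\nabla h(x_k)+\theta\nabla h(x_{k-1})$, apply the descent lemma \eqref{descent:lemma}, build a discrete Lyapunov sequence, control cross terms by Young's inequality and the splitting \eqref{eq:lmm:X}, use the PL inequality of Lemma \ref{prop:PL} to convert gradient norms into function-value gaps, and finish with the strong quasiconvexity inequality to pass from gradients to iterates; your reading of what each constraint in \eqref{param:hb} is for is also largely accurate. The problem is that your proposal stops exactly where the theorem's content lies. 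The assertion to be proved is that this \emph{specific} parameter window forces contraction, and you explicitly defer that verification as ``delicate bookkeeping'' and ``the main obstacle''. That deferral is a genuine gap, not a routine omission, for two concrete reasons.

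First, neither of your candidate energies is the one that makes the bookkeeping close. The paper's Lyapunov sequence is
\[
\mathcal{E}_k \;=\; h(x_k)-h^{*}+\frac{\alpha^2}{\theta+\beta}\,\|x_k-x_{k-1}\|^2+\frac{\theta^2}{\theta+\beta}\,\|\nabla h(x_{k-1})\|^2,
\]
and the third, gradient-history term with that precise coefficient is what absorbs (after Young's inequality) the cross term $\frac{\alpha\theta}{\theta+\beta}\langle\nabla h(x_{k-1}),x_k-x_{k-1}\rangle$ generated by the Hessian correction. With your simpler energy $h(x_k)-h^{*}+\frac{c}{2}\|x_k-x_{k-1}\|^2$ you must instead eliminate every occurrence of $\nabla h(x_{k-1})$ via $\|\nabla h(x_{k-1})\|^2\le 2\|\nabla h(x_k)\|^2+2L^2\|x_k-x_{k-1}\|^2$, and the resulting factor-of-two losses give no guarantee that the window \eqref{param:hb} as stated — rather than a strictly smaller one — survives; your ``fuller form'' alternative with $c_1(x_k-\overline{x})$ inside the norm mirrors Theorem \ref{theo:damping} instead, and creates additional cross terms in $x_k-\overline{x}$ that must be dominated (this route is available, since strong quasiconvexity plus $L$-Lipschitz gradient gives \eqref{weak:quasiconvex} with $\kappa=\gamma/L$, but it is not what the paper does and is not costed in your plan). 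Second, the paper never proves a one-step contraction $E_{k+1}\le\rho E_k$ directly from the descent inequality, as you propose. It argues in two stages: (i) $\mathcal{E}_{k+1}-\mathcal{E}_k\le-\rho\,(\|\nabla h(x_k)\|^2+\|x_{k+1}-x_k\|^2)$, and (ii) $\mathcal{E}_k\le\sigma\,(\|\nabla h(x_k)\|^2+\|x_{k+1}-x_k\|^2)$, whence $\mathcal{E}_{k+1}\le(1-\rho/\sigma)\mathcal{E}_k$. Stage (ii) is obtained by running the recursion \emph{backwards}, bounding the old quantities $\alpha^2\|x_k-x_{k-1}\|^2$ and $\theta^2\|\nabla h(x_{k-1})\|^2$ by the new ones $\|x_{k+1}-x_k\|^2$ and $\|\nabla h(x_k)\|^2$; this is where the condition $\theta<\frac{\alpha}{\sqrt{3}L}$ (equivalently $\alpha^2-3\theta^2L^2>0$) actually enters, as the denominator of those bounds, not in the decrement estimate. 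This backward use of the recursion is the missing device that turns an outline like yours into a proof; without it, or an equivalent mechanism, the quadratic-form inequality you describe has no visible reason to close under \eqref{param:hb}.
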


\begin{proof}
 First note that \eqref{inertialHess:alg} can be-rewriten as
 \begin{align}\label{eq:T0}
  x_{k+1}- x_{k}= \alpha(x_{k} - x_{k-1}) - (\theta + \beta)\nabla h(x_{k}) + \theta \nabla h(x_{k-1}).
 \end{align}
  Hence,
 \begin{align}\label{eq:T1}
  \|x_{k+1}- x_{k}\|^2 = & \, \alpha^2\|x_{k} - x_{k-1}\|^2 + (\theta + \beta)^2 \|\nabla h(x_{k})\|^2 + \theta^2\|\nabla h(x_{k-1})\|^2 \notag \\
  & - 2 \alpha (\theta + \beta) \langle \nabla h(x_{k}), x_{k} - x_{k-1} \rangle + 2 \alpha \theta \langle \nabla h(x_{k-1}), x_{k} - x_{k-1} \rangle \notag \\
  & - 2\theta(\theta + \beta)\langle \nabla h(x_k), \nabla h(x_{k-1})\rangle.
 \end{align}

 Since $\nabla h$ is Lipschitz continuous, we have
 \begin{align}\label{descentproperty}
  h(x_{k+1}) & \leq h(x_k) + \langle \nabla h(x_k), x_{k+1} -x_k \rangle + \frac{L}{2} \|x_{k+1} - x_k\|^2.
 \end{align}
 Replacing  \eqref{eq:T0} and \eqref{eq:T1} into 
 \eqref{descentproperty}, we obtain
\begin{align}\label{eq:T3}
 & h(x_{k+1}) \le h(x_k) + \frac{\alpha^2 L}{2}\|x_{k} - x_{k-1}\|^2  - (\theta  +\beta) \left(1- \frac{L}{2} (\theta + \beta) \right)\| 
 \nabla h(x_k)\|^{2} \notag \\
 & + \frac{\theta^2 L}{2}\|\nabla h(x_{k-1})\|^2 + \theta \left( 1 - L (\theta + \beta) \right) \langle \nabla h(x_k), \nabla h(x_{k-1})\rangle 
 \notag \\
 & +  \alpha \left(1 - (\theta + \beta)L\right) \langle \nabla h(x_k), x_{k} - x_{k-1} \rangle +  L \alpha \theta \langle \nabla h(x_{k-1}), x_{k} - x_{k-1} \rangle.
\end{align}

Multiplying both sides of the equality \eqref{eq:T1} by $\frac{1 - (\theta + \beta) L}{2 (\theta +\beta)}$ and then adding the results with \eqref{eq:T3}, we obtain
\begin{align}
 h(x_{k+1}) & - h^* + \frac{1 - L (\theta + \beta)}{2(\theta + \beta)} \lVert x_{k+1} - x_{k} \rVert^2 + \frac{\theta + \beta}{2} \lVert \nabla h(x_k) \rVert^2 \notag \\
 & \leq h(x_{k}) - h^* + \frac{\alpha^2}{2 (\theta + \beta)} \lVert x_{k} 
 - x_{k-1} \rVert^2 + \frac{\theta^2}{2 (\theta + \beta)} \lVert \nabla h(x_{k-1}) \rVert^2 \notag \\
 & + \frac{\alpha \theta}{\theta + \beta} \langle \nabla h(x_{k-1}), x_{k} - x_{k-1} \rangle.\notag
\end{align}
Since
$$2 \frac{\alpha \theta}{\theta + \beta} \langle  \nabla h(x_{k-1}), x_{k} 
 - x_{k-1} \rangle \leq \frac{\alpha^2}{\theta + \beta}\|x_{k} - x_{k-1}\|^2 
 + \frac{\theta^2}{\theta + \beta} \|\nabla h(x_{k-1})\|^2,$$
the last inequality becomes
\begin{align}
 h(x_{k+1}) - h^* \leq & ~ h(x_{k}) - h^* + \frac{\alpha^2}{(\theta + \beta)} \lVert x_{k} - x_{k-1} \rVert^2 + \frac{\theta^2}{(\theta + \beta)} \lVert \nabla h(x_{k-1}) \rVert^2 \notag \\
 & - \frac{1 - L (\theta + \beta)}{2 (\theta + \beta)} \lVert x_{k+1} - x_{k} \rVert^2 - \frac{\theta + \beta}{2} \lVert \nabla h(x_k) \rVert^2. \label{eq:T4}
\end{align}
Defining the energy function
\begin{align}\label{energy:hb}
\mathcal{E}_{k} := h(x_{k}) - h^* +\frac{\alpha^2}{(\theta + \beta)} \lVert x_{k} - x_{k-1} \rVert^2 + \frac{\theta^2}{(\theta + \beta)} \lVert \nabla h(x_{k-1}) \rVert^2,    
\end{align}
we can write the last inequality as
\begin{align}
 \mathcal{E}_{k+1} - \mathcal{E}_k & \leq - \left( \frac{\theta + \beta}{2} - \frac{\theta^2}{\theta + \beta}\right)\|\nabla h(x_{k})\|^2 - \frac{ \left(1-(\theta + \beta)L - 2\alpha^2 \right)}{ 2(\theta + \beta)} \|x_{k+1} - x_k\|^2 \notag \\ 
 & \leq - \rho (\|\nabla h(x_{k})\|^2 + \|x_{k+1} -x_k\|^2), \label{energyEstimate}
\end{align}
where 
\begin{equation}\label{def:rho01}
 \rho := \min\left\{ \frac{\theta + \beta}{2} - \frac{\theta^2}{\theta + \beta}, \frac{1}{2 (\theta + \beta)} \left(1-(\theta + \beta)L - 2 \alpha^2 \right) \right\},
\end{equation} 
and $\rho > 0$, by \eqref{param:hb}.

It remains to bound $\|\nabla h(x_{k})\|^2 + \|x_{k+1} - x_k\|^2$ below by $\mathcal{E}_{k}$. Indeed, it fo\-llows from Lemma \ref{prop:PL} that 
\begin{equation} \label{firstEst}
 h(x_k) - h^* \leq  \frac{2L}{\gamma^2}\|\nabla h(x_k)\|^2.
\end{equation}

Using \eqref{eq:lmm:X}, \eqref{eq:T0} and the Lipschitz continuity of $\nabla h$, we have
\begin{align}
 \alpha^2\|x_{k} - x_{k-1}\|^2 & = \|x_{k+1} - x_k + \theta(\nabla h(x_k) 
 - \nabla h(x_{k-1}) ) + \beta \nabla h(x_k)\|^2 \notag \\
 & \leq 3 \|x_{k+1} - x_{k}\|^2 + 3 \theta^2 L^2 \|x_{k} - x_{k-1}\|^2 + 
 3 \beta^2 \|\nabla h(x_{k})\|^2, \notag
\end{align}
which implies
\begin{align}\label{eq:T5}
 \alpha^2\|x_{k} - x_{k-1}\|^2 \leq \frac{3 \alpha^2}{\alpha^2 - 3 \theta^2 
 L^2} \|x_{k+1} - x_{k}\|^2 + \frac{3 \alpha^2 \beta^2}{\alpha^2 - 3 
 \theta^2 L^2} \| \nabla h(x_{k})\|^2.
\end{align}
Moreover, it follows from \eqref{eq:T0} that
\begin{align} 
 \theta^2 \| \nabla h(x_{k - 1}) \|^2 & = \| x_{k+1} - x_k - \alpha(x_{k} -
  x_{k-1}) + (\theta + \beta) \nabla h(x_k) \|^2 \notag \\
  & \leq 3 \|x_{k+1} - x_{k}\|^2 + 3 \alpha^2 \|x_{k} - x_{k-1}\|^2 + 3 
 (\theta + \beta)^2 \| \nabla h(x_{k}) \|^2, \notag 
\end{align}
which in turn, combined with \eqref{eq:T5} yields
\begin{align}
 \theta^2 \| \nabla h(x_{k - 1}) \|^2 \leq & ~ 3 \left(1 + \frac{3 \alpha^2}{
 \alpha^2 - 3 \theta^2 L^2} \right) \| x_{k + 1} - x_{k} \|^2 \notag \\
 & + 3 \left( (\theta + \beta)^2 + \frac{3 \alpha^2 \beta^2}{\alpha^2 - 3 
 \theta^2 L^2} \right) \| \nabla h(x_{k})\|^2. \label{eq:T6}
\end{align}
Hence, replacing \eqref{firstEst}, \eqref{eq:T5} and \eqref{eq:T6} into \eqref{energy:hb}, we obtain
\begin{align}
 \mathcal{E}_{k} &= h(x_{k}) - h^*  +\frac{1}{\theta +\beta} \left(
 \alpha^2 \|x_{k} - x_{k-1}\|^2 + \theta^2\| \nabla h(x_{k-1})\|^2\right)
 \notag \\
 & \leq  \frac{1}{\theta +\beta} \left( 3 + \frac{12 \alpha^2}{\alpha^2 -
 3 \theta^2 L^2} \right) \lVert x_{k+1} - x_{k} \rVert^2 \notag \\
 & ~~~~ + \left(\frac{2L}{\gamma^2} + 3(\theta + \beta) + \frac{12 \alpha^2
 \beta^2}{(\alpha^2 - 3\theta^2 L^2)(\theta + \beta)} \right) \|\nabla
 h(x_{k})\|^2 \notag \\
 & \leq \sigma \left( \|\nabla h(x_k)\|^2 +  \|x_{k+1} - x_{k} \|^2 \right),
 \notag
\end{align}
where
\begin{align}\label{def:sigma01}
\sigma := \max\left\{ \frac{1}{\theta +\beta} \left( 3 + \frac{12 \alpha^2}{\alpha^2 - 3 \theta^2 L^2} \right), \frac{2L}{\gamma^2} + 3 (\theta + \beta) + \frac{12 \alpha^2 \beta^2}{(\alpha^2 - 3\theta^2 L^2) (\theta + \beta)} \right\},
\end{align}
and $\sigma > 0$ by \eqref{param:hb}. 

\noindent
Finally, combining the last inequality with \eqref{energyEstimate}
we deduce 
$$\mathcal{E}_{k+1} \leq \left (1 - \frac{\rho}{\sigma} \right) \mathcal{E}_k \leq \left(1 - \frac{\rho}{\sigma} \right)^k \mathcal{E}_1,$$
which means that the sequence $\{\mathcal{E}_k\}_{k}$ converges linearly to $0$ (because $\rho < \sigma$). As a consequence,  the sequence $\{h(x_k)\}_{k}$  converge linearly to the optimal value $h^*$ and the sequences $\{\|\nabla h(x_{k})\|\}_k$ and  $\{\|x_{k} - x_{k-1}\|\}_k$  converge linearly to $0$.
 
Finally, since 
\begin{equation}
 \langle \nabla h(x_{k}), x_{k} - \overline{x} \rangle \geq \frac{\gamma}{2} 
 \lVert x_{k} - \overline{x} \rVert^{2} ~ \Longrightarrow ~ \|x_{k} - \overline{x} 
 \| \leq \frac{2}{\gamma} \| \nabla h(x_{k}) \|, \notag
 \end{equation} 
we conclude that $\{x_k\}_{k}$ converges linearly to $\overline{x}$.
\end{proof}
As a direct consequence of the above theorem, we have the following convergence rate result.

\begin{corollary}[Convergence rate]
 Under the assumptions of Theorem \ref{convergenceHeavyball}, we have
 the following convergence rate
 \begin{align}
 &\frac{\gamma}{4} \|x_{k+1} - \overline{x}\|^2 \leq h(x_{k+1}) - h^{*} \leq \left(1 - \frac{\rho}{\sigma} \right)^k \mathcal{E}_{1}, \\
  & \|x_{k+1} - x_{k}\|^2 \leq \frac{\theta + \beta}{\alpha^2} \left(1-\frac{\rho}{\sigma} \right)^k \mathcal{E}_1.
 \end{align}
 Moreover, for all $k \in \mathbb{N}$, we have
 \begin{align*}
  \|\nabla h(x_{k}) \| & \leq \left(1 - \frac{\rho}{\sigma} \right)^{\frac{k-1}{2}}
  \frac{\sqrt{\theta + \beta}}{\theta}  \sqrt{\mathcal{E}_1}, 
 \end{align*}
 where $\mathcal{E}_{1}$, $\rho$ and $\sigma$ are defined in \eqref{energy:hb}, \eqref{def:rho01} and \eqref{def:sigma01}, respectively.
\end{corollary}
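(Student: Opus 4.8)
The plan is to read off each of the three estimates directly from the linear energy decay already obtained in the proof of Theorem \ref{convergenceHeavyball}, namely from $\mathcal{E}_{k+1} \leq (1 - \rho/\sigma)^{k}\mathcal{E}_1$ together with the observation that every summand of the energy $\mathcal{E}_k$ in \eqref{energy:hb} is nonnegative. Since $\overline{x}$ is the global minimizer (so $h(x_k)-h^*\geq 0$) and $\theta+\beta>0$ by \eqref{param:hb}, each of the three terms $h(x_k)-h^*$, $\frac{\alpha^2}{\theta+\beta}\|x_k-x_{k-1}\|^2$ and $\frac{\theta^2}{\theta+\beta}\|\nabla h(x_{k-1})\|^2$ is individually bounded above by $\mathcal{E}_k$. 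Combining this term-by-term domination with the geometric decay of $\mathcal{E}_k$ will yield all the advertised rates.

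Concretely, for the first chain the left inequality $\frac{\gamma}{4}\|x_{k+1}-\overline{x}\|^2 \leq h(x_{k+1}) - h^*$ is exactly the quadratic growth estimate \eqref{strong min} evaluated at $x_{k+1}$, while the right inequality is immediate from $h(x_{k+1}) - h^* \leq \mathcal{E}_{k+1} \leq (1-\rho/\sigma)^k \mathcal{E}_1$. For the displacement bound I would isolate the corresponding summand, $\frac{\alpha^2}{\theta+\beta}\|x_{k+1}-x_k\|^2 \leq \mathcal{E}_{k+1} \leq (1-\rho/\sigma)^k\mathcal{E}_1$, and multiply through by $\frac{\theta+\beta}{\alpha^2}$, where $\alpha>0$ is what legitimizes the division. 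For the gradient bound I would isolate the gradient summand, $\frac{\theta^2}{\theta+\beta}\|\nabla h(x_k)\|^2 \leq \mathcal{E}_{k+1} \leq (1-\rho/\sigma)^k\mathcal{E}_1$, divide by $\frac{\theta^2}{\theta+\beta}$ (requiring $\theta>0$), and take square roots to get $\|\nabla h(x_k)\| \leq \frac{\sqrt{\theta+\beta}}{\theta}(1-\rho/\sigma)^{k/2}\sqrt{\mathcal{E}_1}$; since $1-\rho/\sigma \in \,]0,1[$, this is in turn dominated by the displayed bound carrying the exponent $\tfrac{k-1}{2}$.

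There is no genuine analytic difficulty here: the corollary is a bookkeeping consequence of the linear energy decay established in Theorem \ref{convergenceHeavyball}. The only points meriting attention are invoking \eqref{strong min} rather than any fresh argument for the lower bound on $h(x_{k+1})-h^*$, and keeping the indices consistent when passing between $\mathcal{E}_{k+1}\leq(1-\rho/\sigma)^{k}\mathcal{E}_1$ and the equivalent $\mathcal{E}_k\leq(1-\rho/\sigma)^{k-1}\mathcal{E}_1$, which is what accounts for the exponent $\tfrac{k-1}{2}$ in the gradient estimate against the exponent $k$ in the other two. The mildest subtlety—hardly an obstacle—is that isolating the displacement and gradient terms silently uses $\alpha>0$ and $\theta>0$, so those two rates are to be read for the nondegenerate parameter choices within the ranges \eqref{param:hb}.
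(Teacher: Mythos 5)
Your proposal is correct and follows essentially the same route the paper intends: the corollary is presented there as a direct consequence of the energy decay $\mathcal{E}_{k+1}\leq\left(1-\frac{\rho}{\sigma}\right)^{k}\mathcal{E}_{1}$ established in Theorem \ref{convergenceHeavyball}, with each rate read off by isolating the corresponding nonnegative summand of $\mathcal{E}_{k+1}$ from \eqref{energy:hb} and, for the lower bound in the first chain, invoking the quadratic growth property \eqref{strong min}. Your observations about the index shift explaining the exponent $\frac{k-1}{2}$ in the gradient estimate and about the implicit requirement $\alpha>0$, $\theta>0$ for the second and third bounds are accurate.
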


\begin{remark}
In the particular case when $\theta=0$ (i.e., without Hessian correction term), Algorithm \ref{inertialHess:alg} reduces to the Heavy Ball gra\-dient method studied in \cite{LMV}, with a slightly different convergence rate (see \cite[Coro\-lla\-ry 26]{LMV}). These convergence rates are consistent with the well-known fact that strongly convex functions exhibit linear convergence (see \cite{Ant,ACFR,P2}).
\end{remark}

\subsection{Nesterov Acceleration }

Let us consider now the Nesterov's accelerated gradient method with Hessian correction \eqref{eq:nesterov_adapted} and constant step sizes, that is,
\begin{align}
\left\{
\begin{array}{ll}\label{NestHess:alg}
 ~~~\, y_{k} = x_{k} +   \alpha (x_{k} - x_{k - 1}) - \theta \left(\nabla h(x_k) - 
 \nabla  h(x_{k-1})\right), \\ [2mm] 
  x_{k+1} = y_{k} - \beta \nabla h(y_{k}),
 \end{array}
 \right.
\end{align}
where $\alpha\in [0,1]$, $\theta\geq 0$, and  $\beta >0$.

Before presenting the main convergence result, we prove an auxiliary result.

\begin{proposition}\label{prop:Q}
 Let $h: \mathbb{R}^n \rightarrow \mathbb{R}$ be a differentiable function with $L$-Lipschitz continuous gradient and strongly quasiconvex with modulus $\gamma > 0$, $\overline{x} = {\rm argmin}_{\mathbb{R}^n} h$, $\eta>1$ 
 and $0 < \beta < \frac{\gamma}{\eta L^2}$.
 Then the sequence generated by \eqref{NestHess:alg} satisfies:
 \begin{equation}\label{eq:001}
  \|x_{k+1} - \overline{x}\|^2 +  \mu_2 \|x_{k+1} - y_{k}\|^2 \leq \mu_1 
  \|y_{k} - \overline{x}\|^2, ~ \forall ~ k \in \mathbb{N},
 \end{equation}
 where 
 \begin{align}\label{eq:mu}
  \mu_1 := \frac{1}{1 + \beta( \gamma - \eta \beta L^2)} \in \, ]0, 1[ ~~ 
  {\rm and} ~~ \mu_2 := \frac{1 - \frac{1}{\eta}}{ 1 + \beta (\gamma - \beta 
  \eta L^2)} >0.
 \end{align}
\end{proposition}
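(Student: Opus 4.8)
The plan is to read \eqref{eq:001} as a one‑step contraction estimate for the \emph{gradient update} $x_{k+1} = y_{k} - \beta \nabla h(y_{k})$ alone; the first line of \eqref{NestHess:alg} and the parameters $\alpha,\theta$ play no role here, so $y_{k}$ may be treated as an arbitrary point from which a single gradient step is taken. First I would expand the squared distance to the minimizer,
$$\|x_{k+1} - \overline{x}\|^2 = \|y_{k} - \overline{x}\|^2 - 2\beta \langle \nabla h(y_{k}), y_{k} - \overline{x}\rangle + \beta^{2}\|\nabla h(y_{k})\|^2,$$
and record the identity $\|x_{k+1} - y_{k}\|^2 = \beta^{2}\|\nabla h(y_{k})\|^2$, which is exactly the quantity weighted by $\mu_2$.

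Next I would supply the two estimates that turn this identity into an inequality in $\|y_{k}-\overline{x}\|^2$ alone. Since $\overline{x}$ is the global minimizer, $h(\overline{x}) \le h(y_{k})$, so Lemma \ref{char:gradient} yields $\langle \nabla h(y_{k}), y_{k} - \overline{x}\rangle \ge \tfrac{\gamma}{2}\|y_{k}-\overline{x}\|^2$; multiplied by $-2\beta<0$ this controls the cross term. For the gradient term, $\nabla h(\overline{x})=0$ together with $L$-Lipschitz continuity gives $\|\nabla h(y_{k})\| \le L\|y_{k}-\overline{x}\|$. Substituting both into the expansion and moving the $\mu_2$-term to the left reduces the entire statement to the single scalar inequality
$$(1 - \beta\gamma) + (1+\mu_2)\beta^{2}L^{2} \le \mu_1.$$

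The main work is then purely algebraic: verifying this inequality for the prescribed $\mu_1,\mu_2$. Writing $c := \beta\gamma$, $s := \beta^{2}L^{2}$ and clearing the common denominator $A := 1 + \beta(\gamma - \eta\beta L^{2}) = 1 + c - \eta s$ — which exceeds $1$ precisely because $\beta < \gamma/(\eta L^{2})$ forces $c - \eta s > 0$, so that indeed $\mu_1 = 1/A \in \,]0,1[$ and $\mu_2 = (1-1/\eta)/A > 0$ — the inequality becomes $2s - c^{2} + cs - \eta s + \eta cs - \eta s^{2} - s/\eta \le 0$. I expect this rearrangement to be the one delicate step. It is made transparent by the two groupings $2s - \eta s - s/\eta = -\tfrac{(\eta-1)^{2}}{\eta}\,s$ and $-c^{2} + (1+\eta)cs - \eta s^{2} = -(c-s)(c-\eta s)$, so that the left-hand side equals $-\tfrac{(\eta-1)^{2}}{\eta}\,s - (c-s)(c-\eta s)$.

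Finally I would check that both summands are nonpositive: the first because $\eta>1$, and the second because $\beta < \gamma/(\eta L^{2}) < \gamma/L^{2}$ gives simultaneously $c-\eta s>0$ and $c-s = \beta(\gamma - \beta L^{2}) > 0$. This closes the scalar estimate and hence establishes \eqref{eq:001}. The only genuinely nontrivial ingredient is the factorization in the last paragraph; everything else is the standard gradient-step expansion combined with the strong quasiconvexity bound of Lemma \ref{char:gradient} and the Lipschitz bound on $\nabla h$.
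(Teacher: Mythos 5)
Your proposal is correct, but it proves the estimate by a genuinely different route than the paper. The paper works with the difference $\|y_k - \overline{x}\|^2 - \|x_{k+1} - \overline{x}\|^2 = \|y_k - x_{k+1}\|^2 + 2\langle y_k - x_{k+1}, x_{k+1} - \overline{x}\rangle$, splits $y_k - x_{k+1} = \beta\nabla h(y_k)$ into $\beta(\nabla h(y_k) - \nabla h(x_{k+1})) + \beta\nabla h(x_{k+1})$, applies Lemma \ref{char:gradient} \emph{at the point $x_{k+1}$}, bounds the difference of gradients by $L\|y_k - x_{k+1}\|$, and absorbs the cross term via Young's inequality $2ab \le \eta a^2 + \tfrac{1}{\eta}b^2$; the constants $\mu_1,\mu_2$ then drop out automatically when the resulting inequality
\begin{equation*}
\bigl(1 + \beta(\gamma - \eta\beta L^2)\bigr)\|x_{k+1} - \overline{x}\|^2 + \Bigl(1 - \tfrac{1}{\eta}\Bigr)\|x_{k+1} - y_k\|^2 \le \|y_k - \overline{x}\|^2
\end{equation*}
is divided through by $1 + \beta(\gamma - \eta\beta L^2)$. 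You instead apply strong quasiconvexity and the Lipschitz bound \emph{at $y_k$} (using $\nabla h(\overline{x}) = 0$ to get $\|\nabla h(y_k)\| \le L\|y_k - \overline{x}\|$), which reduces everything to the scalar inequality $(1-\beta\gamma) + (1+\mu_2)\beta^2 L^2 \le \mu_1$, and you then verify this for the prescribed constants via the factorization $-\tfrac{(\eta-1)^2}{\eta}s - (c-s)(c-\eta s) \le 0$. The trade-off: the paper's derivation \emph{produces} $\mu_1,\mu_2$ rather than verifying them, needs no algebraic identity, and makes the role of the Young parameter $\eta$ transparent; your argument is more elementary in its ingredients (all information is evaluated at $y_k$ and $\overline{x}$ only, never at $x_{k+1}$), but the burden shifts to the factorization step, which is the one place a reader must check carefully --- it does hold, and your use of $\beta < \gamma/(\eta L^2) < \gamma/L^2$ to get both $c - \eta s > 0$ and $c - s > 0$ is exactly what is needed.
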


\begin{proof}
 Note that 
 \begin{align}
  & \|y_k - \overline{x}\|^2 - \|x_{k+1} -  \overline{x}\|^2 = \|y_k - x_{k+1}\|^2 
  + 2 \langle y_k - x_{k+1}, x_{k+1} - \overline{x} \rangle \notag \\
  & = \|y_k - x_{k+1}\|^2 + 2 \beta \langle \nabla\, h(y_k) - \nabla\, h(x_{k+1}),
  x_{k+1} - \overline{x} \rangle + 2 \beta \langle \nabla\, h(x_{k+1}), x_{k+1} 
  - \overline{x} \rangle \notag \\
  & \geq \|y_k - x_{k+1}\|^2 - 2 \beta L \|y_k - x_{k+1} \| \|x_{k+1} - 
  \overline{x}\| + \beta \gamma \|x_{k+1} - \overline{x}\|^2 \notag \\
  & \geq \|y_k - x_{k+1} \|^2 - \frac{1}{\eta} \|y_k - x_{k+1} \|^{2} - \eta 
  (\beta L)^2 \| x_{k+1} - \overline{x} \|^{2} + \beta \gamma \|x_{k+1} - 
  \overline{x} \|^2 \notag \\
  & = \left(1- \frac{1}{\eta} \right) \| y_k - x_{k+1}\|^2 + \beta \left(\gamma 
  - \eta \beta L^2 \right) \|x_{k+1} - \overline{x} \|^2. \label{eq:002} 
 \end{align}
Here, we have used Lemma \ref{char:gradient}, the Lipschitz continuity of $\nabla\, h$ and the property: For all $a,b\in \mathbb{R}_+$, it holds $2ab \leq \eta a^2 + \frac{1}{\eta} b^2$ for every $\eta>0$. 

Then, it follows from \eqref{eq:002} that
\begin{equation}
 \left(1 + \beta\left(\gamma - \eta \beta L^2 \right) \right) \|x_{k+1} 
 - \overline{x}\|^2 + \left(1 - \frac{1}{\eta} \right) \|x_{k+1} - y_{k}\|^2 \leq 
 \|y_{k} - \overline{x}\|^2,\notag
\end{equation}
which implies \eqref{eq:001}.
\end{proof}

In the following theorem, we provide a linear convergence rate for the sequence generated by \eqref{NestHess:alg} to the optimal solution and the optimal value of pro\-blem \eqref{min.h}.

\begin{theorem}\label{convergenceNesterov2}
 Let $h: \mathbb{R}^n \rightarrow \mathbb{R}$ be a differentiable with $L$-Lipschitz conti\-nuous gradient and strongly quasiconvex function with modulus $\gamma > 0$ and $\overline{x} = {\rm argmin}_{\mathbb{R}^n} \,h$. Let $\eta >1$, $0<\beta< \frac{\gamma}{\eta L^2}$ and $\mu_1, \mu_2$ be defined as in \eqref{eq:mu}. Let $\epsilon \in \, \left] 0, \frac{1}{\mu_1} -1 \right[$ and $\alpha$, $\theta$ be satisfying
 \begin{equation}\label{condition}
  \alpha + \theta L \le \frac{\mu_1 \mu_2 (1+ \epsilon)}{\mu_1 \mu_2 (1+\epsilon) + \mu_1+ \frac{\mu_1}{\epsilon} + \mu_2}.
 \end{equation}
  Then the sequence 
 $\{x_k\}_{k}$, generated by \eqref{NestHess:alg}, converges linearly to
 $\overline{x}$ and the sequence $\{h(x_k)\}_{k}$ converges linearly to the 
 optimal value $h^{*} = h(\overline{x})$.
\end{theorem}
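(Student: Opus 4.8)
The plan is to turn the one-step estimate of Proposition \ref{prop:Q} into a genuine contraction for a suitable Lyapunov sequence. The starting point is \eqref{eq:001}, i.e. $\|x_{k+1} - \overline{x}\|^2 + \mu_2\|x_{k+1} - y_k\|^2 \le \mu_1\|y_k - \overline{x}\|^2$, where $\mu_1 \in \,]0,1[$ and $\mu_2 > 0$ by \eqref{eq:mu} (both positivity facts using $0<\beta<\frac{\gamma}{\eta L^2}$). The only quantity on the right-hand side that is not yet ``one step behind'' is $\|y_k - \overline{x}\|^2$, so the first task is to control it in terms of $\|x_k - \overline{x}\|^2$ and a momentum term. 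I would expand $y_k - \overline{x} = (x_k - \overline{x}) + w_k$ with $w_k := \alpha(x_k - x_{k-1}) - \theta(\nabla h(x_k) - \nabla h(x_{k-1}))$, use $L$-Lipschitz continuity of $\nabla h$ to get $\|w_k\| \le (\alpha + \theta L)\|x_k - x_{k-1}\|$, and then apply Young's inequality with the parameter $\epsilon$ to obtain $\|y_k - \overline{x}\|^2 \le (1+\epsilon)\|x_k - \overline{x}\|^2 + (1+\tfrac1\epsilon)(\alpha+\theta L)^2\|x_k - x_{k-1}\|^2$. The hypothesis $\epsilon < \tfrac1{\mu_1}-1$ is used precisely here: it forces $\mu_1(1+\epsilon) < 1$, which is the candidate contraction factor.

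Next I would introduce a Lyapunov function of the form $V_k := \|x_k - \overline{x}\|^2 + \delta\|x_k - x_{k-1}\|^2$ for a weight $\delta>0$ to be fixed, and estimate $\|x_{k+1}-x_k\|^2$ via the decomposition $x_{k+1}-x_k = (x_{k+1}-y_k) + w_k$ together with a second Young inequality. Substituting all bounds into \eqref{eq:001} produces an inequality of the shape $V_{k+1} \le \mu_1(1+\epsilon)\|x_k - \overline{x}\|^2 + (\cdots)\|x_k - x_{k-1}\|^2 + (\cdots)\|x_{k+1}-y_k\|^2$, in which the negative contribution $-\mu_2\|x_{k+1}-y_k\|^2$ carried over from Proposition \ref{prop:Q} is spent to cancel the residual $\|x_{k+1}-y_k\|^2$ term; this is exactly why $\mu_2>0$ is indispensable. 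The objective is then to choose $\delta$ and the auxiliary Young parameter so that the coefficient of $\|x_k - x_{k-1}\|^2$ does not exceed $\delta\,\mu_1(1+\epsilon)$, which would yield $V_{k+1} \le \mu_1(1+\epsilon)\,V_k$ and hence $V_k \le (\mu_1(1+\epsilon))^{k-1}V_1 \to 0$ linearly, giving linear convergence of $\|x_k - \overline{x}\|$ to $0$.

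The last step is the translation to function values. Since $\overline{x}$ is an unconstrained minimizer, $\nabla h(\overline{x})=0$, so the descent lemma \eqref{descent:lemma} applied with the pair $(\overline{x},x_k)$ gives $h(x_k)-h^* \le \tfrac{L}{2}\|x_k - \overline{x}\|^2$, whence $\{h(x_k)\}$ converges linearly to $h^*$; the matching lower bound \eqref{strong min}, $\tfrac{\gamma}{4}\|x_k-\overline{x}\|^2 \le h(x_k)-h^*$, confirms that the two modes of convergence are comparable.

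The hard part will be the constant bookkeeping in the contraction step: one must verify that the system consisting of the coefficient inequality for $\|x_k - x_{k-1}\|^2$ and the requirement that the leftover multiple of $\|x_{k+1}-y_k\|^2$ stay below the available $\mu_2\|x_{k+1}-y_k\|^2$ is simultaneously solvable in the free weight $\delta$ (and the auxiliary Young parameter), with nonempty admissible range, \emph{exactly} when $\alpha+\theta L$ obeys \eqref{condition}. In effect, \eqref{condition} is engineered so that the momentum-induced growth factor $(1+\tfrac1\epsilon)(\alpha+\theta L)^2$, after being weighed against $\mu_1$, $\mu_2$ and $1+\epsilon$, is small enough to keep the overall rate at $\mu_1(1+\epsilon)<1$. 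Tracking these constants faithfully, and checking that the resulting interval for $\delta$ is nonempty, is the main technical obstacle; the rest of the argument is the routine combination of Proposition \ref{prop:Q}, the two Young inequalities, and the telescoping of the Lyapunov recursion.
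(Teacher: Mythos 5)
Your proposal is correct and follows essentially the same route as the paper: Proposition \ref{prop:Q}, the Young-with-$\epsilon$ bound $\|y_k-\overline{x}\|^2 \le (1+\epsilon)\|x_k-\overline{x}\|^2 + (1+\tfrac{1}{\epsilon})(\alpha+\theta L)^2\|x_k-x_{k-1}\|^2$, a Lyapunov sequence of the form $\|x_k-\overline{x}\|^2+\delta\|x_k-x_{k-1}\|^2$, and the contraction factor $\mu_1(1+\epsilon)<1$. The ``constant bookkeeping'' you defer is resolved in the paper by the fixed choice $\delta=\mu_2\bigl(1-(\alpha+\theta L)\bigr)$, obtained from the lower bound $\|x_{k+1}-y_k\|^2 \ge \bigl(1-(\alpha+\theta L)\bigr)\|x_{k+1}-x_k\|^2-(\alpha+\theta L)\|x_k-x_{k-1}\|^2$, after which condition \eqref{condition} is precisely the inequality $\mu_1(1+\epsilon)\mu_2\bigl(1-(\alpha+\theta L)\bigr) \ge \bigl(\mu_1+\tfrac{\mu_1}{\epsilon}+\mu_2\bigr)(\alpha+\theta L)$ that makes the leftover coefficient of $\|x_k-x_{k-1}\|^2$ nonpositive and yields $\mathcal{E}_{k+1}\le \mu_1(1+\epsilon)\mathcal{E}_k$.
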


\begin{proof}
 The main idea is try to bound $\|x_{k+1} - y_k\|$ and $\|y_k -\overline{x}\|$, then replacing into \eqref{eq:001} of Proposition \ref{prop:Q}. Indeed, we have
\begin{align}
 & \|x_{k+1} - y_{k}\|^{2} =\|x_{k+1} - x_k - \alpha (x_k- x_{k-1}) + \theta (\nabla h(x_k) - \nabla h(x_{k-1})\|^2) \notag \\
 &=\|x_{k+1} - x_{k}\|^{2} + \alpha^{2}
 \|x_{k} - x_{k-1}\|^{2} - 2 \alpha \langle x_{k+1} - x_{k}, x_{k} - x_{k-1} \rangle\notag\\
 & + 2 \theta \langle x_{k+1} - x_{k}, \nabla h(x_k) - \nabla h(x_{k-1}) \rangle - 
 2 \alpha \theta \langle x_{k} - x_{k-1},\nabla h(x_k) - \nabla h(x_{k-1}) \rangle 
 \notag \\
 & + \theta^2 \| \nabla h(x_k) - \nabla h(x_{k-1}) \|^2 \notag \\
 & \geq \|x_{k+1} - x_{k}\|^{2} + \alpha^{2} \|x_{k} - x_{k-1}\|^{2} - 
 \alpha \left( \|x_{k+1} - x_{k} \|^2 + \|x_{k} - x_{k-1}\|^2 \right) \notag \\
 & - \theta L \left( \| x_{k+1} - x_{k} \|^2 + \|x_{k} - x_{k-1}\|^2 \right) 
 - \left( \alpha^2 \|x_{k} - x_{k-1}\|^2 + \theta^2 \| \nabla h(x_{k}) - \nabla
 h(x_{k-1}) \|^2\right) \notag \\
 & + \theta^2 \| \nabla h(x_{k}) - \nabla h(x_{k-1}) \|^2  \notag.
 \end{align}
This implies 
\begin{align}\label{eq:d2}
 & \| x_{k+1} - y_{k} \|^{2} \geq (1- (\alpha + \theta L)) \| x_{k+1} - x_k\|^2 
 - (\alpha + \theta L) \| x_{k} - x_{k-1} \|^2.
\end{align}

On the other hand, by the triangle inequality 
\begin{align}
    \|y^k - \overline{x}\| 
    &= \|x_k +\alpha(x_k - x_{k-1}) - \theta (\nabla h(x_k) 
 - \nabla h(x_{k-1})) - \overline{x}\| \notag \notag\\
 & \le \|x_k -\overline{x}\| + (\alpha +\theta L)\|x_k - x_{k-1}\|.\notag
\end{align}
Hence, using Cauchy-Schwartz inequality we can deduce
\begin{align}\label{vu2}
    \|y^k - \overline{x}\| ^2
 & \le (1+\epsilon)\|x_k -\overline{x}\|^2 + \left(1+\frac{1}{\epsilon}\right)(\alpha +\theta L)^2\|x_k - x_{k-1}\|^2.
\end{align}
Combining \eqref{eq:d2} and \eqref{vu2} with \eqref{eq:001}, noting that $(\alpha +\theta L) \in [0, 1[$ we get 
 \begin{align*}
  &\|x_{k+1} - \overline{x}\|^2 +  \mu_2 (1- (\alpha + \theta L)) \| x_{k+1} - x_k\|^2 
 - \mu_2(\alpha + \theta L) \| x_{k} - x_{k-1} \|^2
  \notag \\
 & \leq \mu_1 
(1+\epsilon)\|x_k -\overline{x}\|^2 + \mu_1\left(1+\frac{1}{\epsilon}\right)(\alpha +\theta L)^2\|x_k - x_{k-1}\|^2 \notag \\
&\leq \mu_1 
(1+\epsilon)\|x_k -\overline{x}\|^2 + \mu_1\left(1+\frac{1}{\epsilon}\right)(\alpha +\theta L)\|x_k - x_{k-1}\|^2
 \end{align*}
or equivalently,
 \begin{align}\label{mainEstimate}
  &\|x_{k+1} - \overline{x}\|^2 +  \mu_2 (1- (\alpha + \theta L)) \| x_{k+1} - x_k\|^2 
  \notag \\
  &\leq \mu_1 (1+\epsilon)\|x_k -\overline{x}\|^2 + \left(\mu_1+\frac{\mu_1}{\epsilon} + \mu_2\right)(\alpha +\theta L)\|x_k - x_{k-1}\|^2.
 \end{align}
Denoting for all $k \ge 1$ the Lyapunov function
\begin{align}\label{energy:nes}
 \mathcal{E}_k:= \|x_{k} - \overline{x}\|^2 +  \mu_2 (1- (\alpha + \theta L)) \| x_{k} - x_{k-1}\|^2 \ge 0,
\end{align}
we can deduce from the last inequality that 
\begin{align*}
 \mathcal{E}_{k+1} & \le \mu_{1} (1 + \epsilon) \mathcal{E}_{k} \\ 
 & - \left( \mu_{1} (1 + \epsilon) \mu_{2} (1 -  (\alpha + \theta L)) - \left( \mu_{1} + \frac{\mu_{1}}{\epsilon} + \mu_{2} \right) (\alpha + \theta L) \right) \|x_k - x_{k-1} \|^{2}. 
\end{align*}
Since $\alpha$ and $\theta$ satisfy condition \eqref{condition}
we have 
$$
\mu_1 (1+\epsilon)  \mu_2 (1- (\alpha + \theta L)) - \left(\mu_1+\frac{\mu_1}{\epsilon} + \mu_2\right)(\alpha +\theta L) \ge 0.
$$
Hence, it follows from the last inequality that 
$$ 0 \le \mathcal{E}_{k+1} \leq \mu_1 (1 + \epsilon) \mathcal{E}_k.$$
Since $\mu_1 (1+\epsilon) \in (0,1)$, we conclude that the sequence $\{\mathcal{E}_k\}_{k}$ converges linearly to $0$ with a linear rate $ q:=\mu_1 (1+\epsilon)  \in (0,1)$. As a result, the sequence $\{x_k\}_{k}$ converges linearly to the unique solution $\overline{x}$. The convergence of the value functions is guaranteed by the descent property \eqref{descent:lemma}.
\end{proof}

As a direct consequence of the above theorem, we have the following 
convergence rate result for the Algorithm generated by \eqref{NestHess:alg}.

\begin{corollary}[Convergence rate]
 Under the assumptions of Theorem \ref{convergenceNesterov2} and that $x_0 := x_{-1}$, we have the following convergence rate
  \begin{align*}
   \|x_{k} - \overline{x} \|^2 & \leq \left (\mu_1 (1+\epsilon) \right)^{k-1} \mathcal{E}_1, \\
  \|x_k -  x_{k-1}\|^2 & \le \left( \frac{\mu_1 (1 + \epsilon)}{ \mu_2 (1- (\alpha + \theta L))} \right)^{k-1} \mathcal{E}_1.
 \end{align*}
 Moreover,
 \begin{align}
  & h(x_{k}) - h^{*} \leq \frac{L}{2} \left( \mu_1 (1 + \epsilon) \right)^{k-1} 
  \mathcal{E}_1,
  \end{align}
 where $\mathcal{E}_{1}$ is defined in \eqref{energy:nes} while $\mu_{1}$ and $\mu_{2}$ in \eqref{eq:mu}, respectively.  
\end{corollary}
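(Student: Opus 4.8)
The plan is to read off all three estimates directly from the one-step contraction already established inside the proof of Theorem \ref{convergenceNesterov2}. There we have shown that the Lyapunov function $\mathcal{E}_k$ from \eqref{energy:nes} obeys $0 \le \mathcal{E}_{k+1} \le \mu_1(1+\epsilon)\,\mathcal{E}_k$ for every index at which it is defined, with ratio $\mu_1(1+\epsilon)\in(0,1)$. The assumption $x_0 := x_{-1}$ simply supplies the initialization needed to form $y_0$ (and hence $x_1$ and $\mathcal{E}_1$), so that the contraction is valid starting from $k=1$. Iterating it then gives the master inequality
\[
 \mathcal{E}_k \le \bigl(\mu_1(1+\epsilon)\bigr)^{k-1}\,\mathcal{E}_1, \qquad k \ge 1,
\]
which is the single geometric bound from which everything follows.

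First I would extract the iterate estimate. Both summands in $\mathcal{E}_k = \|x_k - \overline{x}\|^2 + \mu_2\bigl(1-(\alpha+\theta L)\bigr)\|x_k - x_{k-1}\|^2$ are nonnegative, since $\mu_2>0$ by \eqref{eq:mu} and $(\alpha+\theta L)\in[0,1)$ by \eqref{condition}. Discarding the velocity term gives $\|x_k - \overline{x}\|^2 \le \mathcal{E}_k$, and combining with the master inequality yields the first claimed rate. Discarding instead the position term and dividing by the positive constant $\mu_2\bigl(1-(\alpha+\theta L)\bigr)$ gives the analogous geometric decay of $\|x_k - x_{k-1}\|^2$, which is the second inequality.

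Finally, for the function-value bound I would invoke the descent lemma \eqref{descent:lemma} applied to the pair $(\overline{x}, x_k)$. Because $\overline{x}$ is the minimizer we have $\nabla h(\overline{x}) = 0$, so the inner-product term vanishes and the estimate collapses to $h(x_k) - h^{*} \le \tfrac{L}{2}\|x_k - \overline{x}\|^2$. Chaining this with the iterate bound already obtained produces $h(x_k) - h^{*} \le \tfrac{L}{2}\bigl(\mu_1(1+\epsilon)\bigr)^{k-1}\mathcal{E}_1$, the third inequality.

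The computations are entirely routine; the only point demanding care is the sign bookkeeping. One must confirm that $\mu_2\bigl(1-(\alpha+\theta L)\bigr)>0$, which is exactly what makes $\mathcal{E}_k$ a genuine sum of nonnegative terms and legitimizes the division in the second estimate. This is precisely where the standing hypotheses $\eta>1$, $0<\beta<\gamma/(\eta L^2)$, and the upper bound \eqref{condition} on $\alpha+\theta L$ are used. No substantive obstacle is expected beyond this verification, since the analytic heart of the argument, namely the derivation of the contraction, was already carried out in Theorem \ref{convergenceNesterov2}.
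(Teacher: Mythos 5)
Your proof is correct and follows exactly the route the paper intends: the corollary is presented there as a direct consequence of Theorem \ref{convergenceNesterov2}, obtained by iterating the contraction $\mathcal{E}_{k+1}\le \mu_1(1+\epsilon)\,\mathcal{E}_k$, discarding one of the two nonnegative terms of \eqref{energy:nes}, and invoking the descent lemma \eqref{descent:lemma} with $\nabla h(\overline{x})=0$ for the function values.

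One bookkeeping point deserves attention. Dividing the master inequality by the constant $\mu_2\bigl(1-(\alpha+\theta L)\bigr)$ gives
\[
\|x_k-x_{k-1}\|^2 \;\le\; \frac{\bigl(\mu_1(1+\epsilon)\bigr)^{k-1}}{\mu_2\bigl(1-(\alpha+\theta L)\bigr)}\,\mathcal{E}_1,
\]
in which the denominator appears to the \emph{first} power, not to the power $k-1$ as in the corollary's second inequality, so your claim that this ``is the second inequality'' is not literally accurate. The discrepancy is harmless: since $\eta>1$ and $\beta(\gamma-\eta\beta L^2)>0$, relation \eqref{eq:mu} gives $\mu_2\in(0,1)$, and $1-(\alpha+\theta L)\le 1$ by \eqref{condition}, so the constant $\mu_2\bigl(1-(\alpha+\theta L)\bigr)$ lies in $(0,1)$; consequently your bound is stronger than the stated one for every $k\ge 2$ and implies it. (At $k=1$ your bound is in fact the one the contraction argument yields; the stated form degenerates there to $\|x_1-x_0\|^2\le\mathcal{E}_1$, which does not follow from the Lyapunov decay.) So the derivation is the right one; just flag that the displayed estimate is a slightly sharper variant of, rather than identical to, the corollary's second inequality.
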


\begin{remark}
Nesterov accelerated gradient methods with Hessian-driven dam\-ping have been 
well studied in the context of strongly convex functions (see, e.g., \cite{ACFR}). 
For broader classes of nonconvex functions, we refer to \cite{HADR}, where the authors analyze the case of strongly quasar-convex functions (see \cite[Subsection 3.2]{HADR}).
\end{remark}

\section{Numerical Experiments}\label{sec:05}


In this section, we present some numerical examples to illustrate the performance of the Heavy Ball and Nesterov methods with Hessian-driven damping comparing with the vanilla ones.

\begin{example} \label{exam1} 
 The function $h: \mathbb{R} \rightarrow \mathbb{R}$ given by $h(x)=x^{2}+2 \sin^{2}x$ is an example of a strongly quasiconvex function with modulus $\gamma=1/2$ without being convex \cite[Example 30]{LMV}. Clearly, the gradient $\nabla h(x) := 2x + 2 \sin(2x)$ is Lips\-chitz continuous with constant $L=6$. This function has a unique minimizer $x^* =0$. For the Heavy Ball method with Hessian, we select $\alpha = 0.8$, $\theta = 0.05$, and $\alpha = 0.8$, $\theta = 0$ for Heavy Ball method (i.e. without Hessian). For the Nesterov method with Hessian, we select $\alpha = 0.6$, $\theta = 0.05$, and $\alpha = 0.9$, $\theta=0$ for Nesterov method (i.e. without Hessian). All algorithms were run with the same stepsize $\beta = 1/4L$ and $x_0=x_1= 3$.  The performance of four algorithms are displayed in Figure \ref{fig0} indicating the advantage of including the Hessian damping by reducing oscillations. 
  \begin{figure}[htbp]
 \centering 
 \includegraphics[width=0.45\linewidth]{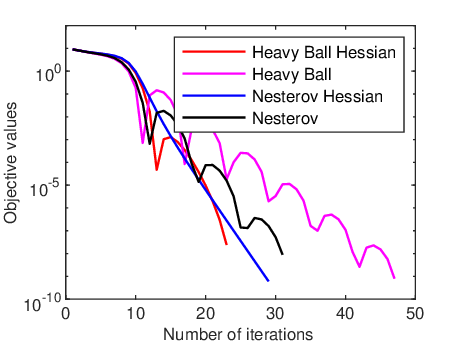} 
 \includegraphics[width=0.45\linewidth]{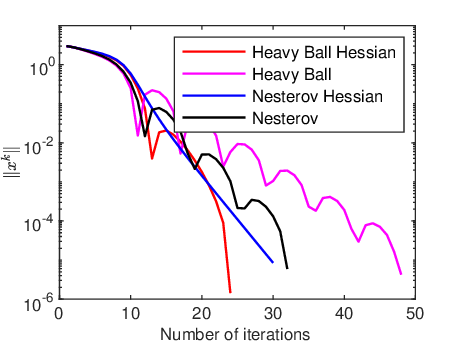} 
 \caption{Comparison in objective values (left) and trajectories (right) for the strongly quasiconvex function $h(x) = x^2 + 3\text{sin}^2(x)$.} \label{fig0}  
\end{figure}
\end{example}
\begin{example} \label{exam2} 
 Let us consider the function $h: \mathbb{R}^{2} \rightarrow \mathbb{R}$ 
given by:
\begin{equation}\label{num:test-1}
 h (x, y) = x^{2} + a y^{2} - \frac{1}{x^{2} + a y^{2} + b} + c,
\end{equation}
for some constant $a,b,c >0$. Clearly, $h$ is twice differentiable and nonconvex (see Figure \ref{fig1} below). Moreover, it can be verified that $h$ it is strongly quasiconvex with modulus and its gradient is Lipschitz (see e.g. \cite[Example 31]{LMV} for a proof of a particular case). 

\begin{figure}[htbp]
\centering
\begin{tikzpicture}
  \begin{axis}[
    view={45}{30},
    domain=-4:4,
    y domain=-4:4,
    samples=60,
    samples y=60,
    xlabel={$x$},
    ylabel={$y$},
    zlabel={$h(x, y)$},
    colormap/viridis,
    colorbar
  ]
    \addplot3[surf] {
      (1/20)*x^2 + (1/5)*y^2 - 1/(x^2 + 4*y^2 + 0.3)
    }
    ;
  \end{axis}
\end{tikzpicture}
\caption{An illustration of the strongly quasiconvex function $h$ defined in 
 \eqref{num:test-1}.} \label{fig1}  
\end{figure}
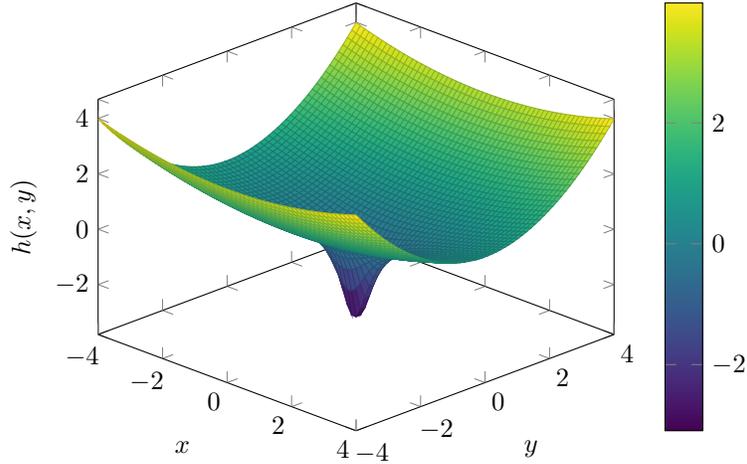
To make the problem ill-conditioned, in the experiment we choose $a=100$, $b=1$ and $c=1$. As the Lipschitz constant is difficult to estimate, we did some parameters turning such that the algorithms work well. For the Heavy Ball method with Hessian, we select $\alpha = 0.8$, $\theta = 0.004$, and $\alpha = 0.8$, $\theta=0$ for Heavy Ball method (i.e. without Hessian). For the Nesterov method with Hessian, we select $\alpha = 0.9$, $\theta = 0.009$, and $\alpha = 0.9$, $\theta=0$ for Nesterov method (i.e. without Hessian). All algorithms were run with the same stepsize $\beta = 0.0025$ and $x_0=x_1=(3,3)$.  The performance of four algorithms are displayed in Figure \ref{fig2} showing again the advantage of including the Hessian damping by reducing oscillations. 

\begin{figure}[htbp]
 \centering 
 \includegraphics[width=0.45\linewidth]{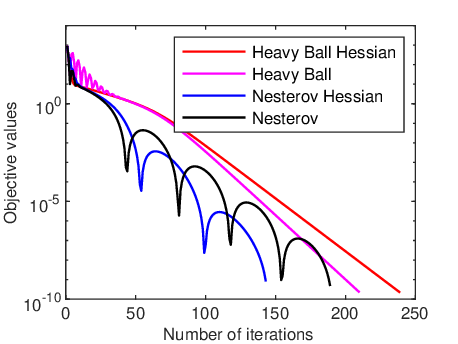} 
 \includegraphics[width=0.45\linewidth]{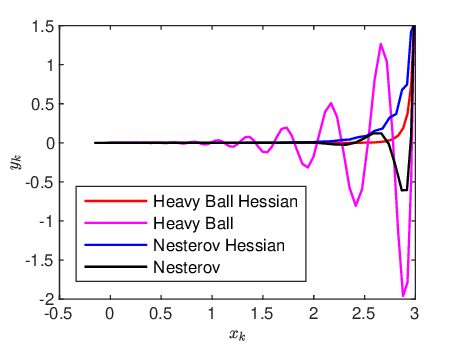} 
 \caption{Comparison in objective values (left) and trajectories (right) for the strongly quasiconvex function $h$ defined in 
 \eqref{num:test-1}.} \label{fig2}  
\end{figure}

\end{example} 

\section{Conclusions}\label{sec:06}
We have further advanced the study of strongly quasi-convex optimization by introducing a second-order dynamical system driven by the Hessian. The discretization of this proposed system naturally recovers two classical momentum-based algorithms with Hessian damping: the Heavy ball method and the Nesterov method. The incorporation of Hessian-driven damping not only acce\-le\-ra\-tes convergence but also reduces the oscillations commonly observed in traditional momentum methods. Future research will explore the behavior of the proposed methods in settings beyond strong quasiconvexity. 

\section{Declarations}







\subsection{Availability of supporting data}

No data sets were generated during the current study. 

\subsection{Author Contributions}

 All authors contributed equally to the study conception, design and implementation and wrote and corrected the manuscript.

\subsection{Competing Interests}

There are no conflicts of interest or competing interests related to this manuscript.

\bigskip

\noindent \textbf{Acknowledgements}
 A part of this paper was completed when Hadjisavvas, Lara an Vuong were visiting the Vietnam Institute for Advanced Study in Mathematics (VIASM), in Hanoi, Vietnam, during March and April 2025. These authors would like to thank VIASM for their support and hospitality. This research was partially supported by ANID--Chile under project Fondecyt Regular 1241040 and by ECOS-ANID project ECOS240031 (Lara), and by BASAL fund FB210005 for center of excellence from ANID-Chile (Marcavillaca).

\end{document}